\newtheorem{theorem}{Theorem}[section]
\newtheorem{corollary}[theorem]{Corollary}
\newtheorem{definition}[theorem]{Definition}
\newtheorem{lemma}[theorem]{Lemma}
\newtheorem{proposition}[theorem]{Proposition}
\newtheorem{remark}[theorem]{Remark}
\newcommand\K{\mathbb K}
\newcommand\N{\mathbb N}
\newcommand\R{\mathbb R}
\begin{document}

\title[Weakly open convex combinations of slices and  C$^*$-Algebras]{Relatively weakly open convex combinations of slices and scattered C$^*$-Algebras }

\author{Julio Becerra Guerrero y Francisco J. Fern\'{a}ndez-Polo}

\address[J. Becerra Guerrero, F.J. Fern\'{a}ndez-Polo]{Departamento de An{\'a}lisis Matem{\'a}tico, Facultad de Ciencias, Universidad de Granada, 18071 Granada, Spain.}

\email{juliobg@ugr.es,  pacopolo@ugr.es, }

%\thanks{}

\subjclass[2010]{Primary 46B04,46L05, Secondary 46B20.}

\keywords{Diameter two property; convex combination of slices; relatively weakly open; L1-predual, scattered C$^*$-algebras}

\date{}

\begin{abstract}

We prove that given a locally compact Hausdorff space, $K$, and a compact C$^*$-algebra, $\mathcal{A}$, the C$^*$-algebra $C(K, \mathcal{A})$ satisfies that every convex combination of slices of the closed unit ball is relatively weakly open subset of the closed unit ball if and only if $K$ is scattered and $\mathcal{A}$ is the $c_0$-sum of finite-dimensional C$^*$-algebras. We introduce and study Banach spaces which have property $(\overline{\hbox{\rm{P1}}})$, i. e.\smallskip

\emph{For every convex combination of slices $C$ of the unit ball of a Banach space $X$  and $x\in C$  there exists $W$ relatively weakly open set containing $x$, such that $W\subseteq \overline{C}$.}\smallskip

In the setting of general C$^*$-algebras we obtain a characterization of this property. Indeed, a C$^*$-algebra has property $(\overline{\hbox{\rm{P1}}})$ if and only if is scattered with finite dimensional irreducible representations. Some stability results for Banach spaces satisfying property $(\overline{\hbox{\rm{P1}}})$ are also given. As a consequence of these results we prove that  a real $L_1$-predual Banach space contains no isomorphic copy of $\ell_1$ if and only if it has property $(\overline{\hbox{\rm{P1}}})$.
\end{abstract}

\maketitle
\thispagestyle{empty}

\section{Introduction}\label{sec:intro}

In \cite{GGMS}, Ghoussoub, Godefroy, Maurey, and Schachermayer exhibited, perhaps for the first time, a remarkable geometrical property of the face of the positive elements in the unit ball of the classical Banach space $L_1[0,1]$. Indeed, it is proved in \cite[Remark IV.5, p. 48]{GGMS} that, for $\mathcal{F}:=\{ f\in L_1[0,1]:\ f\geq 0 \  \Vert f\Vert =1 \}$, any convex combination of a finite number of relatively weakly open subsets (in particular, slices)  of $\mathcal{F}$ is  a  relatively  weakly  open subset of $\mathcal{F}$.  It is well known that all relatively weakly open subsets of the unit ball of a infinite dimensional Banach space contains a convex combination of slices and that the reciprocal is not true. We will say that a Banach space $X$ has property $\hbox{\rm{(P1)}}$ if \smallskip

\emph{For every convex combination of slices $C$ of $B_X$ and $x\in C$  there exists $W$ relatively weakly open subset of the unit ball of $X$ containing $x$, such that $W\subseteq C$.}

\medskip

Quite recently, T.A. Abrahamsen and V. Lima obtained that given a scattered locally compact space $K$, the space of continuous functions on $K$ vanishing at infinity,  $C_0(K, \mathbb{C})$, satisfy property (P1) \cite[Theorem 2.3]{AbrLim}. On the other hand, R. Haller, P. Kuuseok and M. P\~{o}ldvere showed that $K$ is scattered whenever $C_0(K,\mathbb{R})$ has property (P1) \cite[Theorem 3.1]{HalKuuPol}. In \cite{AbrBecHalLimPol}, the authors introduce another geometric property, named $\hbox{\rm{(co)}}$ (see Definition \ref{d (co)}), and show that if a finite dimensional normed space $X$ has this property, then for any scattered locally compact Hausdorff space, $K$, the space $C_0(K,X)$ has property $\hbox{\rm{(P1)}}$.\smallskip

In view of the results obtained in the aforementioned works, the study of property $\hbox{\rm{(P1)}}$ seems to be reduced to a very small family of Banach spaces that except in the case of finite dimension they have in common that their dual has a structure similar to $\ell_1$. It is natural to wonder if those Banach spaces whose duality has a certain similarity to $\ell_1$ have property $\hbox{\rm{(P1)}}$. In this sense, we introduce a property weaker than property $\hbox{\rm{(P1)}}$ that we will be called  $(\overline{\hbox{\rm{P1}}})$ and reads as follows:

\smallskip

\emph{For every convex combination of slices $C$ of $B_X$  and $x\in C$  there exists $W$ relatively weakly open subset of the unit ball of $X$ containing $x$, such that $W\subseteq \overline{C}$.}

\medskip

This paper is organized as follows. In Section 2 we present some stability results for properties  $\hbox{\rm{(P1)}}$ and $(\overline{\hbox{\rm{P1}}})$. For instance, we prove that these properties  are preserved by contractive projections (Proposition \ref{1-complementado}) and by $c_0$-sums (Proposition \ref{c0sum}).  We also show that every Banach space $X$ such that $X^*=\bigoplus_{\gamma \in \Gamma}^{\ell_1} X_\gamma $ for a family of finite-dimensional Banach spaces $\{X_\gamma \}_{\gamma \in \Gamma}$ with the $\hbox{\rm{(co)}}$ property, has property $(\overline{\hbox{\rm{P1}}})$ (Theorem \ref{thm-sum}). As a consequence, every isometric preduals of $\ell_1$ has property  $(\overline{\hbox{\rm{P1}}})$. This provides examples of Banach spaces with property $(\overline{\hbox{\rm{P1}}})$ that are not isomorphic to a complemented subspace of any $C(K)$-space (see \cite[Corollary 1]{BenLin}). We conclude this section showing that for real Banach spaces, property  $(\overline{\hbox{\rm{P1}}})$ is hereditary with respect to subspaces that are ideals (Proposition \ref{locally-complemented-ideal}). As a consequence of the aforementioned results, we obtain that for a real $L_1$-predual Banach space property $(\overline{\hbox{\rm{P1}}})$  is  equivalent  to containing no isomorphic copy of $\ell_1$ (Theorem \ref{L_1-predual}).\smallskip

Section \ref{(co)} is mainly devoted to establish that finite dimensional C$^*$-algebras have property $\hbox{\rm{(co)}}$ (see Definition \ref{d (co)} and Theorem \ref{t Mn is (co)}).  One of the key ingredients in the proof of Theorem \ref{t Mn is (co)} is the perturbation theory in C$^*$-algebras  \cite{Bha}. Concretely, in Theorem \ref{t continuity of spectral resolutions} we generalize some classical results of C. Davis in \cite{Dav} for non-necessarily  self-adjoint elements.\medskip

In Section \ref{sect: scattered} we will focus on the class of scattered C$^*$-algebras introduced and characterized by H.E. Jensen in \cite{Jen77, Jen78}. Since abelian C$^*$-algebras are scattered if and only if satisfy property $\hbox{\rm{(P1)}}$ \cite{AbrLim, HalKuuPol}, it is natural to study properties $\hbox{\rm{(P1)}}$ and $(\overline{\hbox{\rm{P1}}})$ in the setting of general C$^*$-algebras. We show that C$^*$-algebras satisfying property $\hbox{\rm{(P1)}}$ (or $(\overline{\hbox{\rm{P1}}})$) turn out to be scattered (Proposition \ref{p_(P1) implies scattered}). We also prove  that given a locally compact Hausdorff space $K$, and a compact C$^*$-algebra $\mathcal{A}$, the C$^*$-algebra $C(K, \mathcal{A})$ has property $\hbox{\rm{(P1)}}$ if and only if $K$ is scattered and $\mathcal{A}$ is the $c_0$-sum of finite-dimensional C$^*$-algebras (Theorem \ref{c C(K,Mn) has P1}). Finally, we provide a characterization of C$^*$-algebras satisfying property $(\overline{\hbox{\rm{P1}}})$ as those being scattered and having only finite dimensional representations (see Theorem \ref{t caracterizacion P1 barra}).\medskip

\section{Stability results for properties $\hbox{\rm{(P1)}}$ and $(\overline{\hbox{\rm{P1}}})$} \label{sec:ckx-spaces}

\medskip

We shall first introduce some notation. Let us consider $X$ a real or complex Banach space. We will denote by $B_X$ (respectively, $S_X$)  the closed unit ball (respectively, unit sphere) of the Banach space $X$. Given $C$ a norm-closed convex subset of $X$, for every $f\in B_{X^*}$ and $\varepsilon >0$ we define a \emph{slice of} $C$ by $$S(C,f,\varepsilon):=\{x\in C:\ \mathfrak{Re} f( x)>\sup_{c\in C}|f(c)|-\varepsilon\}.$$

\medskip

Secondly we will present the definitions object of study in this paper.\smallskip

\begin{definition}\label{defn:cwos}

Let us consider the following properties for a Banach space $X$:
\begin{enumerate}
\item[$\hbox{\rm{(P1)}}$] For every convex combination of slices $C$ of $B_X$ and $x\in C$  there exists $W$ relatively weakly open subset of $B_X$ containing $x$, such that $W\subseteq C$.

\item[$(\overline{\hbox{\rm{P1}}})$] For every convex combination of slices $C$ of $B_X$  and $x\in C$  there exists $W$ relatively weakly open subset of $B_X$ containing $x$, such that $W\subseteq \overline{C}$.
\end{enumerate}

\end{definition}

\medskip

\begin{definition}\label{d (co)}

A Banach space, $X$, is said to have the  property $\hbox{\rm{(co)}}$ if for every $n\in \mathbb{N}$, given $x_1,\ldots,x_n \in B_X$, $\lambda_1,\ldots,\lambda_n>0$ with $\sum_{i=1}^{n} \lambda_i=1$ and $\varepsilon >0$ there exist $\delta>0$ and continuous functions $\Phi_i:B(x_0,\delta)\cap B_X\to B(x_i,\varepsilon)\cap B_X$, where $x_0=\sum_{i=1}^{n} \lambda_i x_i$, satisfying $y=\sum_{i=1}^{n} \lambda_i\Phi_i(y)$ for  every $y\in B(x_0,\delta)$.

\end{definition}

Throughout this section we will reveal  new results in the setting of general Banach spaces concerning these three properties. We begin by showing that all these properties are preserved by contractive projections.\smallskip

\begin{proposition}\label{1-complementado}
Norm-one complemented subspaces of a Banach space inherit each of properties $\hbox{\rm{(P1)}}$,  $(\overline{\hbox{\rm{P1}}})$ and $\hbox{\rm{(co)}}$.
\end{proposition}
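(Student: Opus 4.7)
The plan is to use that a contractive projection $P:X\to Y$ lets us pull back slices (by composing functionals with $P$) and push back convex decompositions (by applying $P$), while the equality $P|_Y=\mathrm{id}_Y$ and $\|P\|=1$ preserve norms and slice thresholds. Throughout, fix $Y\subseteq X$ norm-one complemented with projection $P:X\to Y$, and for any $f\in B_{Y^*}$ set $\tilde f=f\circ P\in B_{X^*}$. Since $P|_Y=\mathrm{id}_Y$ and $\|P\|\le 1$ we have $\|\tilde f\|=\|f\|$, and moreover $\sup_{x\in B_X}\mathfrak{Re}\,\tilde f(x)=\sup_{y\in B_Y}\mathfrak{Re}\,f(y)$, so $S(B_X,\tilde f,\alpha)\cap Y = S(B_Y,f,\alpha)$ and $P\bigl(S(B_X,\tilde f,\alpha)\bigr)\subseteq S(B_Y,f,\alpha)$.

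For \textbf{property (P1)} (and $(\overline{\hbox{\rm{P1}}})$): let $C=\sum_{i=1}^n\lambda_i S(B_Y,f_i,\alpha_i)$ and $y\in C$. Writing $y=\sum\lambda_i y_i$ with $y_i\in S(B_Y,f_i,\alpha_i)$ and using the identification above, $y$ belongs to $\tilde C:=\sum_{i=1}^n\lambda_i S(B_X,\tilde f_i,\alpha_i)$. By (P1) in $X$ choose a relatively weakly open $\tilde W\subseteq B_X$ with $y\in\tilde W\subseteq\tilde C$. Set $W:=\tilde W\cap B_Y$, which is relatively weakly open in $B_Y$ and contains $y$. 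For $w\in W$ we have $w=\sum\lambda_i x_i$ with $x_i\in S(B_X,\tilde f_i,\alpha_i)$; applying $P$ and using $Pw=w$ yields $w=\sum\lambda_i P x_i$ with $P x_i\in S(B_Y,f_i,\alpha_i)$, so $w\in C$. The same argument works for $(\overline{\hbox{\rm{P1}}})$: if $\tilde W\subseteq \overline{\tilde C}$ and $w\in\tilde W\cap B_Y$, a net $w_\beta\in\tilde C$ with $w_\beta\to w$ gives $Pw_\beta\in C$ with $Pw_\beta\to Pw=w$, so $w\in\overline{C}$.

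For \textbf{property (co)}: given $y_1,\dots,y_n\in B_Y$, convex weights $\lambda_i$ with barycenter $y_0=\sum\lambda_i y_i$, and $\varepsilon>0$, apply (co) in $X$ to obtain $\delta>0$ and continuous maps $\Phi_i:B(y_0,\delta)\cap B_X\to B(y_i,\varepsilon)\cap B_X$ with $y=\sum\lambda_i\Phi_i(y)$. Since $B(y_0,\delta)\cap B_Y\subseteq B(y_0,\delta)\cap B_X$, define $\Psi_i:=P\circ\Phi_i$ on $B(y_0,\delta)\cap B_Y$. These are continuous, map into $B_Y$, and using $Py_i=y_i$ together with $\|P\|\le 1$ we get $\|\Psi_i(y)-y_i\|=\|P\Phi_i(y)-Py_i\|\le\|\Phi_i(y)-y_i\|<\varepsilon$. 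Finally $\sum\lambda_i\Psi_i(y)=P\!\sum\lambda_i\Phi_i(y)=Py=y$.

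The only subtlety I anticipate is the bookkeeping of slice thresholds under $f\mapsto \tilde f$, which is why one must check that $P$ being a norm-one projection (not merely contractive from $X$ to $Y$) forces $\|\tilde f\|=\|f\|$ and preserves the supremum defining the slice. Once this identification is in place, the three verifications are parallel and essentially formal.
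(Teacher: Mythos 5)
Your proposal is correct and follows essentially the same route as the paper: pull the slices back to $B_X$ via $f\mapsto f\circ P=P^*f$, invoke the property in $X$, and transfer the decomposition back to $Y$ using $P|_Y=\mathrm{id}_Y$. The only (minor) difference is that you obtain the relatively weakly open set in $B_Y$ as $\tilde W\cap B_Y$ rather than as the image $P(\tilde W)$ used in the paper, which is in fact the cleaner way to justify openness; your treatment of $(\overline{\hbox{\rm{P1}}})$ and $\hbox{\rm{(co)}}$ is likewise sound.
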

\begin{proof}

Let $X$ be a Banach space, let $P:X\to X$ be a contractive projection and $Y=P(X)$.
Let $\{S(B_Y,f_i,\varepsilon_i)\}_{i=1}^k$ be slices of $B_{Y}$,let $\lambda_i > 0$ with $\sum_{i=1}^k
\lambda_i = 1$, and consider the convex combination of these slices
\begin{equation*}
    C = \sum_{i=1}^k \lambda_i S(B_Y,f_i,\varepsilon_i).
\end{equation*}
Take $y = \sum_{i=1}^k \lambda_i y_i $ in $C$ with $y_i \in S(B_Y,f_i,\varepsilon_i)$. We consider the slices of $B_X$, $\{S(B_X,P^*(f_i),\varepsilon_i)\}_{i=1}^k$, and $ \widetilde{C}:= \sum_{i=1}^k \lambda_i S(B_X,P^*(f_i),\varepsilon_i).$ We can assume that $X$ has property $\hbox{\rm{(P1)}}$. Since $y\in \widetilde{C}$,  there exists $W$ relatively weakly open subset of $B_X$ containing $y$ such that $W\subseteq \widetilde{C}$. Now, $P(W)$ is a relatively weakly open subset of $B_Y$ satisfying  $y\in P(\widetilde{C})\subseteq C$.

In the case that property $(\overline{\hbox{\rm{P1}}})$ is satisfied, the proof is similar and the case of the $\hbox{\rm{(co)}}$ property is trivial.
\end{proof}

\medskip

The following is a frequently used technical result.

\begin{lemma}\label{prime}
Let $X$ be a Banach space. Consider $f\in B_{X^*}, \varepsilon\in\mathbb R^+$ and $x\in S(B_X,f,\varepsilon)$. Then, there exists $\widetilde{\varepsilon}, \ \rho >0$  such that for all $g\in B_{X^*}$ with $\Vert f-g\Vert <\rho$ we have that $$x\in S(B_X,g,\widetilde{\varepsilon})\subseteq S(B_X,f,\varepsilon).$$
\end{lemma}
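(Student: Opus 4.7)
The plan is to exploit the ``slack'' with which $x$ sits inside the slice $S(B_X,f,\varepsilon)$. Set
\[
\eta := \mathfrak{Re}\, f(x) - (\|f\| - \varepsilon),
\]
so $\eta>0$ by hypothesis. For any perturbation $g$ of $f$, the two elementary estimates
\[
|f(y)-g(y)| \le \|f-g\|\quad (y\in B_X) \qquad\text{and}\qquad \bigl|\|f\|-\|g\|\bigr|\le \|f-g\|
\]
are all we should need, so the whole argument reduces to choosing $\rho$ and $\widetilde{\varepsilon}$ so that two competing inequalities hold simultaneously.

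First I would derive the condition for the inclusion $S(B_X,g,\widetilde{\varepsilon})\subseteq S(B_X,f,\varepsilon)$. If $y\in B_X$ satisfies $\mathfrak{Re}\, g(y)>\|g\|-\widetilde{\varepsilon}$, then
\[
\mathfrak{Re}\, f(y) \ge \mathfrak{Re}\, g(y)-\|f-g\| > \|g\|-\widetilde{\varepsilon}-\rho \ge \|f\|-2\rho-\widetilde{\varepsilon},
\]
so the inclusion will follow as soon as $2\rho+\widetilde{\varepsilon}\le \varepsilon$.

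Next I would derive the condition for $x\in S(B_X,g,\widetilde{\varepsilon})$. We have
\[
\mathfrak{Re}\, g(x) \ge \mathfrak{Re}\, f(x)-\rho = \|f\|-\varepsilon+\eta-\rho, \qquad \|g\|\le \|f\|+\rho,
\]
so $\mathfrak{Re}\, g(x)>\|g\|-\widetilde{\varepsilon}$ is guaranteed whenever $\widetilde{\varepsilon}>\varepsilon-\eta+2\rho$.

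The only (very mild) obstacle is reconciling both requirements: one needs $\varepsilon-\eta+2\rho<\widetilde{\varepsilon}\le \varepsilon-2\rho$, which is non-empty precisely when $4\rho<\eta$. I would therefore choose, for example, $\rho:=\eta/5$ and $\widetilde{\varepsilon}:=\varepsilon-\eta/2$ (noting $\eta\le\varepsilon$ so $\widetilde{\varepsilon}>0$), verify both displayed inequalities with these values, and conclude. No functional-analytic machinery beyond the definition of the dual norm is required.
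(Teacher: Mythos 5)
Your argument is correct: the two perturbation estimates $|f(y)-g(y)|\le\Vert f-g\Vert$ on $B_X$ and $\bigl|\Vert f\Vert-\Vert g\Vert\bigr|\le\Vert f-g\Vert$, together with your choice $\rho=\eta/5$, $\widetilde{\varepsilon}=\varepsilon-\eta/2$, do satisfy both required inequalities (and $\eta\le\varepsilon$ ensures $\widetilde{\varepsilon}>0$). The paper states this lemma without proof, and your argument is exactly the standard one intended.
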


\medskip

Let $\Gamma$ be a set, and let  $\{X_\gamma : \gamma \in \Gamma\}$ be a family of Banach spaces indexed by $\Gamma$.\smallskip

We recall that the $c_0$-sum of the family $\{X_\gamma : \gamma \in \Gamma\}$, denoted by  $\bigoplus_{\gamma \in \Gamma}^{c_0} X_\gamma $, is the Banach
space $$\bigoplus_{\gamma \in \Gamma}^{c_0} X_\gamma :=\{ (x_\gamma ):x_\gamma \in X_\gamma ,\
\lim_{\gamma \in \Gamma }\{\Vert x_\gamma \Vert_\gamma \}=0 \} $$ and $\Vert (x_\gamma )\Vert =
\sup \{\Vert x_\gamma \Vert_\gamma :\gamma \in  \Gamma \}$ for each $(x_\gamma )\in \bigoplus_{\gamma \in \Gamma}^{c_0} X_\gamma $.\smallskip

The $\ell_1$-sum of the family $\{X_\gamma : \gamma \in \Gamma\}$, denoted by  $\bigoplus_{\gamma \in \Gamma}^{\ell_1} X_\gamma $, is the Banach space $$\bigoplus_{\gamma \in \Gamma}^{\ell_1} X_\gamma:=\{ (x_\gamma ):x_\gamma \in X_\gamma , \gamma \in \Gamma ,\ \sum_{\gamma \in \Gamma}\Vert x_\gamma \Vert_\gamma <\infty \} $$ and $\Vert (x_\gamma )\Vert = \sum_{\gamma \in \Gamma}\Vert x_\gamma \Vert_\gamma$. \smallskip

Finally, the $\ell_{\infty}$-sum of the family $\{X_\gamma : \gamma \in \Gamma\}$, $\bigoplus_{\gamma \in \Gamma}^{\ell_{\infty}} X_\gamma $, is the Banach space $$\bigoplus_{\gamma \in \Gamma}^{\ell_{\infty}} X_\gamma :=\{ (z_\gamma ):z_\gamma \in X_\gamma  ,\ \sup \{\Vert x_\gamma \Vert_\gamma \}<\infty \} $$ and $\Vert (z_\gamma )\Vert = \sup \{\Vert z_\gamma\Vert_\gamma :\gamma \in \Gamma \}$ for each $(z_\gamma )\in \bigoplus_{\gamma \in \Gamma}^{\ell_{\infty}} X_\gamma $.\smallskip

It is also well known that $(\bigoplus_{\gamma \in \Gamma}^{c_0} X_\gamma)^*= \bigoplus_{\gamma \in \Gamma}^{\ell_1} X_\gamma^* $ and  $(\bigoplus_{\gamma \in \Gamma}^{\ell_1} X_\gamma)^* =\bigoplus_{\gamma \in \Gamma}^{\ell_{\infty}} X_\gamma^*.$  For each $\mathcal R\subseteq \Gamma $, we denote by $P_{\mathcal R}^0$ (respectively, $P_{\mathcal R}^1$, $P_{\mathcal R}^{\infty}$) the canonical projection of $\bigoplus_{\gamma \in \Gamma}^{c_0} X_\gamma $ (respectively, $\bigoplus_{\gamma \in \Gamma}^{\ell_1} X_\gamma^* $, $\bigoplus_{\gamma \in \Gamma}^{\ell_{\infty}} X_\gamma^{**} $)  onto $\bigoplus_{\gamma \in \mathcal{R}}^{c_0} X_\gamma $ (respectively, $\bigoplus_{\gamma \in \mathcal{R}}^{\ell_1} X_\gamma^* $, $\bigoplus_{\gamma \in \mathcal{R}}^{\ell_{\infty}} X_\gamma^{**} $).

\medskip

The following result generalizes Theorem 5.2 in \cite{AbrBecHalLimPol} since every finite dimensional Banach space with the $\hbox{\rm{(co)}}$ property  has $\hbox{\rm{(P1)}}$ \cite[Proposition 2.2]{AbrBecHalLimPol}.\medskip

\begin{proposition}\label{c0sum} Let $\Gamma$ be a set, $\{X_\gamma : \gamma \in \Gamma\}$ a family of Banach spaces indexed by $\Gamma$. Then $Z:=\bigoplus _{\gamma \in \Gamma}^{c_0} X_\gamma $ has property $\hbox{\rm{(P1)}}$ (respectively, $(\overline{\hbox{\rm{P1}}})$) if and only if $X_\gamma$ has property $\hbox{\rm{(P1)}}$  (respectively, $(\overline{\hbox{\rm{P1}}})$) for every $\gamma \in \Gamma$.
\end{proposition}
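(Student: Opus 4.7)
The ``only if'' direction is immediate from Proposition \ref{1-complementado}, since for each $\gamma_0 \in \Gamma$ the canonical coordinate projection $P_{\{\gamma_0\}}^0$ is contractive on $Z$ with range isometric to $X_{\gamma_0}$.

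For the ``if'' direction I treat property $\hbox{\rm{(P1)}}$; the $(\overline{\hbox{\rm{P1}}})$ case will follow by the same scheme with a single norm-approximation at the end. Take a convex combination of slices $C = \sum_{i=1}^k \lambda_i S(B_Z, f_i, \varepsilon_i)$ together with a representation $x = \sum_i \lambda_i y_i \in C$ with $y_i$ in the $i$-th slice. Since $f_i = (f_i^\gamma)_\gamma$ lies in $Z^* = \bigoplus_\gamma^{\ell_1} X_\gamma^*$, I first invoke Lemma \ref{prime} to obtain $\tilde\varepsilon_i, \rho_i > 0$ and then pick a finite $\mathcal{R} \subseteq \Gamma$ large enough that $g_i := P_{\mathcal{R}}^1(f_i) \in B_{Z^*}$ satisfies $\|g_i - f_i\| < \rho_i$. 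Lemma \ref{prime} then yields $y_i \in S(B_Z, g_i, \tilde\varepsilon_i) \subseteq S(B_Z, f_i, \varepsilon_i)$, reducing matters to functionals supported on finitely many coordinates.

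Writing $\alpha_i^\gamma := \|g_i^\gamma\|$ and $\beta_i^\gamma := \mathfrak{Re}\, g_i^\gamma(y_i^\gamma)$, the slice condition on $y_i$ reads $\sum_{\gamma \in \mathcal{R}}(\alpha_i^\gamma - \beta_i^\gamma) < \tilde\varepsilon_i$ with non-negative summands. Pick $\sigma > 0$ with $\sum_\gamma (\alpha_i^\gamma - \beta_i^\gamma) + |\mathcal{R}|\sigma < \tilde\varepsilon_i$ for every $i$, and set $\mu_i^\gamma := \alpha_i^\gamma - \beta_i^\gamma + \sigma > 0$. For each $\gamma \in \mathcal{R}$ form the convex combination of slices $D^\gamma := \sum_i \lambda_i S(B_{X_\gamma}, g_i^\gamma, \mu_i^\gamma)$; by construction $y_i^\gamma$ lies in the $i$-th slice, so $x^\gamma = \sum_i \lambda_i y_i^\gamma \in D^\gamma$. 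Property $\hbox{\rm{(P1)}}$ of $X_\gamma$ yields a relatively weakly open $U^\gamma \subseteq B_{X_\gamma}$ with $x^\gamma \in U^\gamma \subseteq D^\gamma$. Set
\[
W := \{\, z \in B_Z : z^\gamma \in U^\gamma \text{ for every } \gamma \in \mathcal{R}\,\},
\]
which is relatively weakly open in $B_Z$ (a finite intersection of preimages under the weakly continuous coordinate projections) and contains $x$.

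Given $z \in W$, decompose $z^\gamma = \sum_i \lambda_i u_i^\gamma$ inside $D^\gamma$ for each $\gamma \in \mathcal{R}$ and lift to $Z$ by $w_i^\gamma := u_i^\gamma$ on $\mathcal{R}$ and $w_i^\gamma := z^\gamma$ elsewhere; then $w_i \in B_Z$, $\sum_i \lambda_i w_i = z$, and the calibration of $\sigma$ gives $\mathfrak{Re}\, g_i(w_i) > \|g_i\| - \sum_\gamma \mu_i^\gamma > \|g_i\| - \tilde\varepsilon_i$, so $w_i \in S(B_Z, g_i, \tilde\varepsilon_i) \subseteq S(B_Z, f_i, \varepsilon_i)$, whence $z \in C$. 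For $(\overline{\hbox{\rm{P1}}})$, the same construction works after replacing $U^\gamma \subseteq D^\gamma$ by $U^\gamma \subseteq \overline{D^\gamma}$: for $z \in W$ norm-approximate each $z^\gamma$ ($\gamma \in \mathcal{R}$) by points of $D^\gamma$ and reassemble; because only the finitely many coordinates in $\mathcal{R}$ change, the resulting convex combinations converge to $z$ in the norm of $Z$ and lie in $\sum_i \lambda_i S(B_Z, g_i, \tilde\varepsilon_i) \subseteq C$, so $z \in \overline{C}$. The delicate point is the calibration just above: since $|\mathcal{R}|$ was fixed by the first reduction, the slack $\tilde\varepsilon_i - \sum_\gamma(\alpha_i^\gamma - \beta_i^\gamma) > 0$ leaves a positive budget to distribute via $\sigma$ across coordinates, but without first restricting to finite support this budget would be exhausted as $|\mathcal R|$ grows.
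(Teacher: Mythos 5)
Your proposal is correct and follows essentially the same route as the paper's proof: reduce to finitely supported functionals via Lemma \ref{prime} and the $\ell_1$-structure of $Z^*$, distribute the remaining slack $\tilde\varepsilon_i-\sum_\gamma(\alpha_i^\gamma-\beta_i^\gamma)$ uniformly over the finitely many coordinates (your $\sigma$ is the paper's $\varepsilon$, and your $\mu_i^\gamma$ are exactly the widths of the paper's slices $S_i^j$), apply $\hbox{\rm{(P1)}}$ coordinatewise on $\mathcal{R}$, and glue the resulting decompositions. The only difference is that you spell out the norm-approximation argument for the $(\overline{\hbox{\rm{P1}}})$ case, which the paper leaves as ``analogous''.
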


\begin{proof} The only if part is given by Proposition \ref{1-complementado}. Now, let $\{S(B_Z,f_i,\varepsilon_i)\}_{i=1}^n$   be slices of $B_{Z}$, $f_1,..,f_n\in S_{Z^*}$, let $\lambda_i > 0$ with $\sum_{i=1}^n \lambda_i = 1$, and consider the convex combination of these slices \begin{equation*}
C = \sum_{i=1}^n \lambda_i S(B_Z, f_i,\varepsilon_i).
\end{equation*}

Let $z = \sum_{i=1}^n \lambda_i z_i \in C$ with $z_i \in S(B_Z,f_i,\varepsilon_i)$. Our goal is to find a non-empty relatively weakly open subset of $B_Z$ containing $z$ that is contained in $C$. By Lemma \ref{prime}, given $i\in \{1,.., n \}$ and $\varepsilon _i$, there exists $\widetilde{\varepsilon}_i, \ \rho _i >0$  such that for all $g\in B_{Z^*}$ with $\Vert f_i-g\Vert <\rho _i$ we have that $$z_i\in S(B_Z,g,\widetilde{\varepsilon}_i)\subseteq S(B_Z,f_i,\varepsilon _i).$$

Fix $\mathcal R \subseteq \Gamma$ a finite subset with $N\in \mathbb{N}$ elements, such that $\Vert f_i-P_{\mathcal R}^1(f_i)\Vert <\rho _i$ for all $i\in \{1,.., n \}$, where $P_{\mathcal R}^1$ is the natural projection onto $\bigoplus _{\gamma \in \mathcal{R}}^{\ell_1} X^*_\gamma$. Then we have that $$z \in \sum_{i=1}^n \lambda_i S(B_Z,P_{\mathcal R}^1(f_i),\widetilde{\varepsilon}_i)\subseteq C .$$
Since for $i\in \{1,.., n \}$, $\mathfrak{Re}P_{\mathcal R}^1(f_i)(z_i)> \| P_{\mathcal R}^1(f_i)\|- \widetilde{\varepsilon}_i$, we set $\varepsilon >0$ such that $$\mathfrak{Re}P_{\mathcal R}^1(f_i)(z_i)-N \varepsilon> \|P_{\mathcal R}^1(f_i)\|- \widetilde{\varepsilon}_i $$  for all $i\in \{1,\ldots,n\}$.\smallskip

Given $i\in \{1,.., n \}$ and $j\in \mathcal R$, we consider the following slices in the closed unit ball of the Banach space $X_j$ $$S_i^j:=S(B_{X_j},P_{\mathcal R}^1(f_i)(j),\|P_{\mathcal R}^1(f_i)(j)\|-\mathfrak{Re}P_{\mathcal R}^1(f_i)(z_i(j))+\varepsilon).$$ Now, for every $j\in \mathcal R$, we have that $$z(j)= \sum_{i=1}^n \lambda_i z_i(j)\in \sum_{i=1}^n \lambda_i S_i^j,$$ and since $X_j$ has property $\hbox{\rm{(P1)}}$, there exists $W_j$ relatively weakly open subset of $B_{X_j}$ containing $z(j)$, such that $ W_j \subseteq \sum_{i=1}^n \lambda_i S_i^j$. We define the set $$W:=(\prod_{\j\in \mathcal{R}} W_j)\times B_{(I-P_{\mathcal R}^0)(Z)},$$ which is clearly a relatively weakly open subset $B_Z$ containing $z$.\smallskip

We will finish the proof by showing that $W\subseteq C$. Indeed, given $y\in W$ we have that $y(j)\in W_j$ for every $j\in \mathcal{R}$ and thus $y(j)=\sum_{i=1}^{n} \lambda_i y(j)_i$ where each $y(j)_i$ belong  to $S_i^j$. Therefore we can define, for each $i\in \{1,\ldots,n\}$, the element $y_i\in Z$ given by $$y_i(\gamma):=\left\{%
\begin{array}{ll}
    y(\gamma)_i, & \gamma \in \mathcal{R} \\
   y(\gamma), &  \gamma \in \Gamma\backslash\mathcal{R}\\

\end{array}%
\right.$$  satisfying $y=\sum_{i=1}^{n} \lambda_i y_i$ and $$ \mathfrak{Re}P_{\mathcal R}^1(f_i) y_i= \sum_{j\in \mathcal{R}}\mathfrak{Re}P_{\mathcal R}^1(f_i)y(j)_i>\sum_{j\in \mathcal{R}} (\mathfrak{Re}P_{\mathcal R}^1(f_i)(z_i(j))-\varepsilon)=$$ $$\mathfrak{Re}P_{\mathcal R}^1(f_i)(z_i)- N\varepsilon>\| P_{\mathcal R}^1(f_i) \|-\tilde{\varepsilon}_i,$$ so thus $\displaystyle y\in  \sum_{i=1}^n \lambda_i S(B_Z,P_{\mathcal R}^1(f_i),\widetilde{\varepsilon}_i)\subseteq C$.

In case we are dealing  property $(\overline{\hbox{\rm{P1}}})$ the proof is analogous.\end{proof}

\medskip

The following result provides new examples of Banach spaces of type $C (K, X)$ with property $\hbox{\rm{(P1)}}$, where $X$ is an infinite dimensional Banach space.

\medskip

\begin{corollary}\label{slices-for-C_0}
Let $K$ be scattered compact Hausdorff space and let $\{X_\gamma \}_{\gamma \in \Gamma}$ be a family of finite-dimensional Banach spaces with $\hbox{\rm (co)}$ property. Then  $C(K,(\bigoplus_{\gamma \in \Gamma}^{c_0} X_\gamma ))$ has property $\hbox{\rm{(P1)}}$.
\end{corollary}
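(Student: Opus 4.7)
My plan is to reduce the statement to the combination of two already-established facts: the stability result for $c_0$-sums (Proposition \ref{c0sum}) and Theorem 5.2 of \cite{AbrBecHalLimPol}, which (as the excerpt recalls) asserts that for every scattered locally compact Hausdorff space $K$ and every finite-dimensional Banach space $X$ enjoying property $\hbox{\rm{(co)}}$, the space $C_0(K,X)$ has property $\hbox{\rm{(P1)}}$.

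The crucial identification is the isometric isomorphism
\begin{equation*}
C\Bigl(K, \bigoplus_{\gamma\in\Gamma}^{c_0} X_\gamma\Bigr) \;\cong\; \bigoplus_{\gamma\in\Gamma}^{c_0} C(K, X_\gamma),
\end{equation*}
given by $f\mapsto (f_\gamma)_{\gamma\in\Gamma}$ where $f_\gamma:=\pi_\gamma\circ f$ and $\pi_\gamma$ is the canonical coordinate projection. That this map is a linear isometry onto is what I would verify as the first (and really the only non-formal) step. The inclusion of the right-hand side into the left is easy: a sequence $(f_\gamma)\in \bigoplus_{\gamma}^{c_0} C(K, X_\gamma)$ satisfies $\|f_\gamma\|_\infty\to 0$, so $f(k):=(f_\gamma(k))_\gamma$ is a uniform limit on $K$ of the continuous maps $P_{\mathcal R}^0\circ f$ (as $\mathcal R$ runs over the finite subsets of $\Gamma$) and therefore continuous into $\bigoplus_\gamma^{c_0} X_\gamma$. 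The reverse containment uses the compactness of $K$: if $f\in C(K, \bigoplus_{\gamma}^{c_0} X_\gamma)$, the image $f(K)$ is norm-compact in $\bigoplus_{\gamma}^{c_0} X_\gamma$, so for every $\varepsilon>0$ there is a finite $\mathcal R\subset \Gamma$ with $\sup_{k\in K}\|f(k)-P_{\mathcal R}^0 f(k)\|<\varepsilon$, which is precisely $\sup_{\gamma\notin\mathcal R}\|f_\gamma\|_\infty\le\varepsilon$; hence $(f_\gamma)\in \bigoplus_{\gamma}^{c_0} C(K, X_\gamma)$. A check of norms on both sides shows the identification is isometric.

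Granted this, the proof concludes in two lines. For each $\gamma\in\Gamma$ the summand $C(K, X_\gamma)$ has property $\hbox{\rm{(P1)}}$ by Theorem 5.2 of \cite{AbrBecHalLimPol}, because $K$ is scattered (compact Hausdorff spaces are locally compact) and $X_\gamma$ is finite-dimensional with property $\hbox{\rm{(co)}}$. Proposition \ref{c0sum} then yields that $\bigoplus_{\gamma\in\Gamma}^{c_0} C(K, X_\gamma)$ has property $\hbox{\rm{(P1)}}$, and the isometric identification above transports this to $C\bigl(K, \bigoplus_{\gamma}^{c_0} X_\gamma\bigr)$.

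The only place where anything has to be done carefully is the first step; everything else is invoking results already available in the paper or its references. No genuine obstacle is to be expected, since the compactness of $K$ provides the uniform control that forces a continuous $\bigoplus^{c_0}$-valued function to have its coordinate functions tend uniformly to zero.
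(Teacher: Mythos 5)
Your proof is correct and follows essentially the same route as the paper: identify $C(K,\bigoplus_{\gamma}^{c_0}X_\gamma)$ isometrically with $\bigoplus_{\gamma}^{c_0}C(K,X_\gamma)$, invoke the result of \cite{AbrBecHalLimPol} that $C(K,X_\gamma)$ has property $\hbox{\rm{(P1)}}$ for each finite-dimensional $X_\gamma$ with property $\hbox{\rm{(co)}}$ (the paper cites Theorem~2.5 there, not Theorem~5.2, but the content is the one you use), and conclude by Proposition~\ref{c0sum}. Your extra verification of the isometric identification via compactness of $K$ is a detail the paper leaves implicit, and it is carried out correctly.
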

\begin{proof}
For every family of Banach spaces  $\{X_\gamma \}_{\gamma \in \Gamma}$ , the equality $C(K,(\bigoplus_{\gamma \in \Gamma}^{c_0} X_\gamma ))=\bigoplus_{\gamma \in \Gamma}^{c_0} C(K,X_\gamma )$  holds. Since $X_\gamma$ has the $\hbox{\rm (co)}$ property, by \cite[Theorem 2.5]{AbrBecHalLimPol}, $C(K,X_\gamma )$ has property $\hbox{\rm{(P1)}}$ for every $\gamma \in \Gamma$. Finally, Proposition \ref{c0sum} applies.\end{proof}

\medskip

\begin{theorem}\label{thm-sum}
Let $\{X_\gamma \}_{\gamma \in \Gamma}$ be a family of finite-dimensional Banach spaces with $\hbox{\rm (co)}$
property. Let $Z$ be a Banach space such that $Z^*=\bigoplus_{\gamma \in \Gamma}^{\ell_1} X_\gamma $. Then $Z$ has property $(\overline{\hbox{\rm{P1}}})$.
\end{theorem}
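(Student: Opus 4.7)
The strategy is to adapt the argument of Proposition~\ref{c0sum}, replacing the direct decomposition of points in $Z$ by a decomposition performed in the bidual $Z^{**}=\bigoplus_{\gamma\in \Gamma}^{\ell_\infty}X_\gamma^*$ and then descending back to $Z$ via Goldstine's theorem. The coincidence of norm and weak closures on the convex set $C$ is what makes this descent possible, and is exactly why the conclusion we can hope for is $(\overline{\hbox{P1}})$ rather than $(\hbox{P1})$.

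First, I would reduce to finite support exactly as in the proof of Proposition~\ref{c0sum}. Write $C=\sum_{i=1}^n\lambda_i S(B_Z,f_i,\varepsilon_i)$ and $z=\sum_i\lambda_i z_i$ with $z_i\in S(B_Z,f_i,\varepsilon_i)$. Lemma~\ref{prime} furnishes $\widetilde{\varepsilon}_i,\rho_i>0$; pick a finite $\mathcal{R}\subseteq \Gamma$ with $\|f_i-g_i\|<\rho_i$ where $g_i:=P_\mathcal{R}^1(f_i)$, so that $z_i\in S(B_Z,g_i,\widetilde{\varepsilon}_i)\subseteq S(B_Z,f_i,\varepsilon_i)$. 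Let $\pi_\gamma:Z\to X_\gamma^*$ denote the weakly continuous coordinate maps obtained by restricting the weak-$*$-continuous projections $Z^{**}\to X_\gamma^*$ to $Z\subseteq Z^{**}$. Invoke the finite-dimensional $(\hbox{co})$-type hypothesis on each piece with $\gamma\in\mathcal{R}$, applied to $\pi_\gamma(z)=\sum_i\lambda_i\pi_\gamma(z_i)$ with a margin $\varepsilon'>0$, to produce radii $\delta_\gamma>0$ and continuous decomposition maps $\Phi_i^\gamma$. Set
\[
W:=\{y\in B_Z:\|\pi_\gamma(y)-\pi_\gamma(z)\|<\delta_\gamma\ \forall\, \gamma\in\mathcal{R}\},
\]
a relatively weakly open neighborhood of $z$ in $B_Z$.

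Next, for every $y\in W$ I build virtual summands $\widetilde{y}_i\in Z^{**}$ coordinate-wise by $\widetilde{y}_i(\gamma)=\Phi_i^\gamma(\pi_\gamma(y))$ for $\gamma\in\mathcal{R}$ and $\widetilde{y}_i(\gamma)=\pi_\gamma(y)$ for $\gamma\notin\mathcal{R}$. One checks $\widetilde{y}_i\in B_{Z^{**}}$, $\sum_i\lambda_i\widetilde{y}_i=y$, and, using that $g_i$ is supported on $\mathcal{R}$, an estimate entirely parallel to the one in the proof of Proposition~\ref{c0sum} gives $\mathfrak{Re}\, g_i(\widetilde{y}_i)>\|g_i\|-\widetilde{\varepsilon}_i$ provided $\varepsilon'$ was taken small. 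To conclude $y\in\overline{C}$, use that $\overline{C}$ agrees with the weak closure of $C$ by convexity and Hahn--Banach, so it suffices to meet every weak neighborhood $U$ of $y$ in $Z$. For $U$ determined by finitely many $h_1,\ldots,h_m\in Z^*$, Goldstine's theorem produces $y_i\in B_Z$ approximating $\widetilde{y}_i$ on the finite test set $\{g_i,h_1,\ldots,h_m\}$ accurately enough that simultaneously $y_i\in S(B_Z,g_i,\widetilde{\varepsilon}_i)\subseteq S(B_Z,f_i,\varepsilon_i)$ and $\sum_i\lambda_i y_i\in U$, placing an element of $C$ inside $U$.

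The main obstacle is precisely this last descent from $Z^{**}$ back to $Z$: the relation $\sum_i\lambda_i\widetilde{y}_i=y$ lives only in the bidual, so any $(y_i)\subseteq B_Z$ produced by Goldstine gives $\sum_i\lambda_i y_i$ weakly but not norm close to $y$. The weak/norm closure coincidence for the convex set $C$ is what bridges the gap and explains why one obtains $(\overline{\hbox{P1}})$ rather than $(\hbox{P1})$ in this generality.
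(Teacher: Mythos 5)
Your proof is correct and follows essentially the same route as the paper's: reduce to the finitely supported functionals $P^1_{\mathcal R}(f_i)$ with a margin from Lemma \ref{prime}, apply the $\hbox{\rm{(co)}}$ maps coordinatewise over the finite set $\mathcal R$ to decompose each point of a relatively weakly open neighbourhood as a convex combination inside $B_{Z^{**}}$, and descend to $\overline{C}$ via Goldstine together with the coincidence of weak and norm closures of the convex set $C$. Your use of the weak-to-norm continuity of the finite-dimensional coordinate projections to make $W$ weakly open is just a repackaging of the paper's explicit $\delta/9$-net of functionals, so the difference is cosmetic.
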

\begin{proof}

As before, let $C$ be a convex combination of slices of the unit ball of $Z$,
\begin{equation*}
    C = \sum_{i=1}^n \lambda_i S(B_Z, f_i,\varepsilon_i).
\end{equation*}

Fix  $z_0 = \sum_{i=1}^k \lambda_i z_i \in C$ with $z_i \in S(B_Z,f_i,\varepsilon_i)$. Our goal is to find a
non-empty relatively weakly open subset of $B_Z$ containing $z_0$, that is contained in $\overline{C}$.\smallskip

Let $ \eta $ be a positive number satisfying $\mathfrak{Re}f_i(z_i)-3\eta>\|f_i\|-\varepsilon_i$ for all $i\in \{1,\ldots,n\}$. Associated to $\eta$ we can find a finite set $\mathcal R \subseteq \Gamma$ such that $\|f_i-P_{\mathcal R}^1(f_i)\|<\eta$ for all $i\in \{1,\ldots,n\}$ and let $N\in \mathbb{N}$ denote the cardinal of $\mathcal{R}$.\smallskip

By hypothesis $Z^*=\bigoplus_{\gamma \in \Gamma}^{\ell_1} X_\gamma $ and hence $Z^{**}=\bigoplus_{\gamma \in \Gamma}^{\ell_{\infty}} X_\gamma $. We will represent the elements of $Z$ as elements in $Z^{**}$.\smallskip

Since $X_\gamma$ have the $\hbox{\rm{(co)}}$ property for every $\gamma \in \Gamma$, given $\frac{\eta}{3N}>0$ (and $z_0(\gamma)\in X_{\gamma}$) there exist $\delta_{(i,\gamma)}>0$ and continuous functions $\Phi_{(i,\gamma)}:B(z(\gamma),\delta_{(i,\gamma)})\cap B_{X_\gamma} \to B(z_i(\gamma),\frac{\eta}{3N})\cap B_{X_\gamma}$ such that for every $y\in B(z_0(\gamma ),\delta_{(i,\gamma )})\cap B_{X_\gamma }$ we have $y= \sum_{i=1}^k \lambda_i \phi _{(i,\gamma )}(y)$, where $i\in \{1,\ldots,n\}$.\medskip

We take $\delta :=\min \{\{\delta_{(i,\gamma )}:\ 1\leq i\leq k,\ \gamma \in \mathcal R  \}, \frac{\eta }{3N} \}$.\smallskip

For every $\gamma \in \mathcal R$, we can choose a $\delta/9$-net of the unit sphere of $X_{\gamma}$, $(\varphi_j(\gamma ))_{j=1}^{M_\gamma }$, where $M_\gamma$ is the (finite) cardinal of the net. We can extend this functionals to $S_{Z^*}$ in a natural way, defining $\varphi_{(j,\gamma )}(\xi )= \delta _{\gamma ,\xi}\varphi_j(\gamma )$ for each $\xi\in \mathcal R$ (and zero elsewhere), where in this case $\delta _{\gamma ,\xi }$ is the Kronecker delta.\smallskip

We now define the relatively weakly open subset of $B_Z$

\begin{equation*}
    \mathcal{U} =
    \left\{
      z \in B_Z :
        |g(z-z_0)| < \delta/9,
        g\in H
    \right\},
\end{equation*}

where $H:=\{\varphi_{(j,\gamma )}:\gamma \in \mathcal R , 1 \leq j \leq M_\gamma \}\cup \{P_{\mathcal R}^1(f_i):1\leq i\leq k\}.$\smallskip

We consider $\widetilde{C}:=\sum_{i=1}^n \lambda_i S(B_{Z^{**}},f_i,\varepsilon_i)$ and $$\widetilde{\mathcal{U}} =     \left\{
      z \in B_{Z^{**}} :
        |g(z-z_0)| < \delta/9,
        g\in H
    \right\}.$$

We claim that $\widetilde{\mathcal{U}}\subseteq \widetilde{C}$.\medskip

Let $z$ be in $\widetilde{\mathcal{U}}$. We will define $z_i \in S(B_{Z^{**}},f_i,\varepsilon_i)$, $i=1,2,\ldots,k$, and show that $z$ can be written as $z = \sum_{i=1}^k \lambda_i z_i \in \widetilde{C}$.\smallskip

Since $z\in \widetilde{\mathcal{U}}$,  for $\gamma \in \mathcal R$ and $1 \leq j \leq M_\gamma $, $$\vert \varphi_{(j,\gamma )}(z_0-z)\vert < \delta /9 ,$$ and hence $\Vert z_0(\gamma )-z(\gamma )\Vert_{X_\gamma }<\frac{\delta}{3}$.\smallskip

Having in mind that   $z_0(\gamma ) = \sum_{i=1}^n \lambda_i z_i(\gamma )$ for $\gamma \in \mathcal R$,  $X_\gamma $ has the $\hbox{\rm{(co)}}$ property, and $\Vert z_0(\gamma )-z(\gamma )\Vert _{X_\gamma}<\frac{\delta}{3}$, we have that  $$z(\gamma ) = \sum_{i=1}^n \lambda_i \Phi _{(i,\gamma )}(z_i(\gamma ))\ \  \mbox{with}\  \   \Vert
z_i(\gamma )-\Phi _{(i,\gamma )}(z_i(\gamma ))\Vert _{X_\gamma
}<\frac{\eta}{3N}.$$

We define $\tilde{z}_i\in B_{Z^{**}}$ by $\tilde{z}_i(\gamma)=\Phi _{(i,\gamma )}(z_i(\gamma ))$  whenever $\gamma \in \mathcal R$ and $\tilde{z}_i(\gamma )=z(\gamma )$ otherwise.\smallskip

It is clear that $z = \sum_{i=1}^n \lambda_i \tilde{z}_i$ and it only remains to show that $\tilde{z}_i\in
S(B_{Z^{**}},f_i,\varepsilon_i)$.\smallskip

We have
  \begin{equation*}
    |P_{\mathcal R}^1(f_i)(\tilde{z}_i - z_i)|
    \leq \sum_{\gamma \in \mathcal R }|P_{\mathcal R}^1(f_i)(\gamma )(\tilde{z}_i(\gamma ) - z_i(\gamma ))|
  \end{equation*}
  \begin{equation*}
    = \sum_{\gamma \in \mathcal R }|P_{\mathcal R}^1(f_i)(\gamma )(\Phi _{(i,\gamma )}(z_i(\gamma )) -
    z_i(\gamma ))|\leq N \Vert z_i(\gamma )-\Phi
_{(i,\gamma )}(z_i(\gamma ))\Vert _{X_\gamma } < N \frac{\eta}{3N}
<\frac{\eta}{3}
  \end{equation*}
Since $\|f_i-P_{\mathcal R}^1(f_i)\| < \eta$,  we have $|f_i(z_i) - P_{\mathcal R}^1(f_i)(z_i)| < \eta$ and $|f_i(\tilde{z}_i) - P_{\mathcal R}^1(f_i)(\tilde{z}_i)| < \eta$.  Hence $|f_i(\tilde{z}_i - z_i)| < 3\eta$  so that  \begin{equation*}
   \Re f_i(\tilde{z}_i) \ge \Re f_i(z_i) - 3\eta
    > 1 - \varepsilon_i,
  \end{equation*}
and we are done.

Finally, for each $y\in U$, we have that $y = \sum_{i=1}^n \lambda_i \tilde{z}_i$ with $\tilde{z}_i\in S(B_{Z^{**}},f_i,\varepsilon_i)$. Since $B_Z$ is $w^*$-dense in $B_{Z^{**}}$, we have that $y$ is the $w^*$-limit in $Z^{**}$ of the net $(\sum_{i=1}^n \lambda_i x_{(i,\alpha )})$ with $x_{(i,\alpha )}\in S(B_Z,f_i,\varepsilon_i)$. So it follows that $y$ is the $w$-limit of $(\sum_{i=1}^n \lambda_i x_{(i,\alpha )})$, and since $C$ is convex subset of $B_Z$, we concluded that $y\in \overline{C}$. \end{proof}

\medskip

\begin{remark}
Looking at the proof of Theorem \ref{thm-sum}, we realize that, if $Z$ is a Banach space such that $Z^*=\bigoplus_{\gamma \in \Gamma}^{\ell_1} X_\gamma $ for a family of finite-dimensional Banach spaces $\{X_\gamma \}_{\gamma \in \Gamma}$ with property $\hbox{\rm (co)}$, then every convex combination of $w^*$-slices of the unit ball of $Z^{**}$ is a relatively $w^*$-open subset of $B_{Z^{**}}$. In the particular case that  $X_\gamma =\R$ for every  $\gamma \in \Gamma$ this result is obtained in \cite[Proposition 4.5]{GiMaRu}.
\end{remark}

\medskip

The latter theorem  will be decisive in order to characterize C$^*$-algebras with property $(\overline{\hbox{\rm{P1}}})$ (see Theorem \ref{t caracterizacion P1 barra}). Having in mind that $\mathbb{C}$ and $\mathbb{R}$ have the $\hbox{\rm{(co)}}$ property (Proposition 2.3 in \cite{AbrBecHalLimPol}) we can establish the following result.\smallskip

\begin{corollary} Every isometric predual of $\ell_1$ has property $(\overline{\hbox{\rm{P1}}})$.

\end{corollary}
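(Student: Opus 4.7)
The plan is to derive the corollary as an immediate specialization of Theorem \ref{thm-sum}. Let $Z$ be an isometric predual of $\ell_1$, so that $Z^* \cong \ell_1$ isometrically. The key observation is that $\ell_1$ itself is naturally an $\ell_1$-sum of one-dimensional spaces: writing $\Gamma = \mathbb{N}$ and taking $X_\gamma$ to be the scalar field ($\mathbb{R}$ in the real case or $\mathbb{C}$ in the complex case) for each $\gamma \in \Gamma$, we have
\begin{equation*}
Z^* \;\cong\; \ell_1 \;=\; \bigoplus_{\gamma \in \Gamma}^{\ell_1} X_\gamma,
\end{equation*}
which is precisely the hypothesis required in Theorem \ref{thm-sum}.

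It then remains only to verify that each summand $X_\gamma$ is a finite-dimensional Banach space with property $\hbox{\rm (co)}$. Finite-dimensionality is obvious since $\dim X_\gamma = 1$. The $\hbox{\rm (co)}$ property for the scalar field is exactly what is recorded in Proposition 2.3 of \cite{AbrBecHalLimPol}, which the authors explicitly invoke in the paragraph preceding the corollary statement. With both hypotheses of Theorem \ref{thm-sum} satisfied, the conclusion is that $Z$ has property $(\overline{\hbox{\rm{P1}}})$.

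There is essentially no obstacle here; the corollary is stated precisely as the simplest non-trivial instance of Theorem \ref{thm-sum}, obtained by choosing the summands to be the ground field. The only thing worth noting is that the identification $Z^* \cong \ell_1$ must be isometric (not merely isomorphic), because the decomposition of $Z^*$ as an $\ell_1$-sum and the construction of the relatively weakly open set in the proof of Theorem \ref{thm-sum} rely on the actual $\ell_1$-norm structure of the dual.
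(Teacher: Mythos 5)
Your proof is correct and is exactly the argument the paper intends: the corollary is stated right after the observation that $\mathbb{R}$ and $\mathbb{C}$ have property $\hbox{\rm{(co)}}$, and it follows by writing $Z^*\cong\ell_1=\bigoplus_{\gamma}^{\ell_1}\K$ and applying Theorem \ref{thm-sum}. Your added remark that the identification must be isometric is a sensible clarification, but otherwise there is nothing to add.
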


\medskip

The following reformulation of property $(\overline{\hbox{\rm{P1}}})$ will be useful in succeeding results.

\medskip

\begin{lemma}\label{P1-bidual}
Let $X$ be a Banach space. Then $X$ has property $(\overline{\hbox{\rm{P1}}})$ if and only if for every convex 	 combination of $w^*$-slices $C$ of $B_{X^{**}}$ and $x\in C \cap X$  there exists $W$ relatively $w^*$-open neighborhood of $x$, such that $W\subseteq \overline{C}^{w^*}$.
\end{lemma}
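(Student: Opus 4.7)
Write $C := \sum_{i=1}^n \lambda_i S(B_X, f_i, \varepsilon_i)$ and $C^{**} := \sum_{i=1}^n \lambda_i S(B_{X^{**}}, f_i, \varepsilon_i)$ for the convex combinations of slices defined by the same data. The plan is to handle both implications using Goldstine's theorem after first establishing two bookkeeping identities: $\overline{C^{**}}^{w^*} = \overline{C}^{w^*}$ (by Goldstine plus the openness of the slice condition, every $\sum\lambda_i y_i^{**} \in C^{**}$ is $w^*$-approximable by elements of $C$, so $C^{**}\subseteq\overline{C}^{w^*}$) and $\overline{C}^{w^*}\cap X = \overline{C}^w = \overline{C}$ (by Hahn--Banach separation plus the coincidence of weak and norm closures for convex sets).

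For $(\Leftarrow)$: given $x \in C$, the inclusion $S(B_X, f_i, \varepsilon_i)\subseteq S(B_{X^{**}}, f_i, \varepsilon_i)$ (both slices use the same norm $\|f_i\|$, by Goldstine) yields $C\subseteq C^{**}$, so $x \in C^{**}\cap X$. The bidual hypothesis supplies a $w^*$-open neighborhood $\tilde W$ of $x$ in $B_{X^{**}}$ with $\tilde W\subseteq\overline{C^{**}}^{w^*}$. Setting $W:=\tilde W\cap B_X$ produces a relatively weakly open neighborhood of $x$ in $B_X$ (the weak topology on $X$ being the restriction of $w^*$ on $X^{**}$), and the bookkeeping identities yield $W\subseteq\overline{C^{**}}^{w^*}\cap X = \overline{C}$.

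For $(\Rightarrow)$: given $x\in C^{**}\cap X$, I first argue $x\in C$. Writing $x = \sum\lambda_i y_i^{**}$ with $\Re f_i(y_i^{**})>\|f_i\|-\varepsilon_i+\eta$ for some $\eta>0$, the principle of local reflexivity applied to $E:=\mathrm{span}\{y_1^{**},\ldots,y_n^{**}\}$ (which contains $x$) and $F:=\mathrm{span}\{f_1,\ldots,f_n\}$ yields a linear map $T:E\to X$ with $T|_{E\cap X}=\mathrm{id}$, $\|T\xi\|\leq(1+\varepsilon)\|\xi\|$, and $f_j(T\xi)=f_j(\xi)$. Setting $z_i:=Ty_i^{**}$ gives $z_i\in X$, $\sum\lambda_i z_i=Tx=x$, $\Re f_i(z_i)>\|f_i\|-\varepsilon_i+\eta$, and $\|z_i\|\leq 1+\varepsilon$; for $\varepsilon$ small compared with $\eta$, the strict slice margin lets a careful renormalization place $x$ in $C$. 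Then $(\overline{\hbox{\rm{P1}}})$ supplies a basic weakly open $W=\{y\in B_X:|g_k(y-x)|<\delta,\,1\leq k\leq m\}$ with $W\subseteq\overline{C}$, and lifting to $\tilde W:=\{y^{**}\in B_{X^{**}}:|g_k(y^{**}-x)|<\delta,\,1\leq k\leq m\}$ gives a $w^*$-open neighborhood of $x$ with $\tilde W\subseteq\overline{W}^{w^*}\subseteq\overline{C}^{w^*}=\overline{C^{**}}^{w^*}$ (the inclusion $\tilde W\subseteq\overline{W}^{w^*}$ is Goldstine applied inside $\tilde W$).

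The hard step I expect is the inclusion $C^{**}\cap X\subseteq C$ in the reverse direction. A naive Goldstine approximation produces only $w^*$-approximants of $x$ by elements of $C$ (hence $x\in\overline{C}$), not a genuine decomposition of $x$ with $X$-ingredients satisfying the exact slice constraints. Local reflexivity is what simultaneously preserves the values $f_j(y_i^{**})$ and the sum identity $\sum\lambda_i y_i^{**}=x$, and the strict margin $\eta$ built into the slice definitions absorbs the unavoidable $(1+\varepsilon)$-factor in the norm estimate.
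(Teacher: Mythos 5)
Your reverse implication is correct and is essentially the paper's own argument: restrict the $w^*$-neighbourhood to $B_X$, use that each $S(B_X,f_i,\varepsilon_i)$ is $w^*$-dense in $S(B_{X^{**}},f_i,\varepsilon_i)$ (Goldstine plus the relative $w^*$-openness of the slice condition) to get $\overline{C^{**}}^{w^*}=\overline{C}^{w^*}$, and use convexity of $C$ to identify $\overline{C}^{w^*}\cap X$ with $\overline{C}$. The problem is the forward implication, at exactly the step you yourself flag as hard: the inclusion $C^{**}\cap X\subseteq C$. Local reflexivity does give $z_i=Ty_i^{**}\in X$ with $\sum_i\lambda_i z_i=x$, $f_j(z_i)=y_i^{**}(f_j)$ and $\|z_i\|\le 1+\varepsilon$, but the promised ``careful renormalization'' cannot be performed when $\|x\|=1$, which is the only nontrivial case. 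If $x=\sum_i\lambda_i x_i$ with $x_i\in B_X$ and $\|x\|=1$, then necessarily $\|x_i\|=1$ for every $i$, so there is no slack whatsoever to absorb the factor $1+\varepsilon$: for instance, replacing $z_i$ by $(1-t)z_i+tw_i$ with $w_i\in B_X$ and $\sum_i\lambda_i w_i=x$ yields norms at most $1+\varepsilon(1-t)$, which exceeds $1$ for every $t<1$. The margin $\eta$ lives in the values $\Re f_i(\cdot)$, not in the norms, so it cannot be traded against the $(1+\varepsilon)$. The obstruction is genuine rather than technical: if $x$ is an extreme point of $B_X$ that fails to be extreme in $B_{X^{**}}$ (such non-preserved extreme points exist), the only decomposition of $x$ with components in $B_X$ is $x_i=x$ for all $i$, and then $x\in C$ would force $\Re f_i(x)>\|f_i\|-\varepsilon_i$ for every $i$, which is not implied by $x\in C^{**}$.

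For comparison, the paper's proof of this direction does not use local reflexivity: it simply declares that $C_{X^{**}}\cap B_X$ ``is a convex combination of slices of $B_X$'' containing $x_0$ and applies $(\overline{\hbox{\rm{P1}}})$ to it, i.e.\ it takes for granted exactly the inclusion you are trying to prove. So you have correctly isolated the one step that carries all the content of the forward implication, and your local-reflexivity idea is the natural line of attack, but as written it does not close the gap; any repair must produce components lying in $B_X$ itself, not merely in $(1+\varepsilon)B_X$, while keeping the barycenter exactly at $x$.
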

\begin{proof} Assume first that $X$ has property $(\overline{\hbox{\rm{P1}}})$.  We consider $C_{X^{**}}$ a convex combination of $w^*$-slices of $B_{X^{**}}$ and $x_0\in C_{X^{**}} \cap X$. Now  $C_X=C_{X^{**}}\cap B_X$  is a convex combination of slices of $B_X $ and $x_0\in C_X $. Since $X$ has property $\overline{(P1)}$, there is a relatively weakly open subset $U$ of $B_X$ with $x_0\in U\subseteq \overline{C}_X$. Then $U$ contains a
set $V$ of the form $$\{x\in B_X:\vert g_i(x-x_0)\vert <\delta\, \, \, \forall i=1,...,n\},$$ for suitable $\delta>0$, $n\in \N $, and $g_1,...,g_n\in X^*$. Set $$V^{**}:=\{z\in B_{X^{**}}:\vert g_i(z-x_0)\vert <1\, \, \, \forall i=1,...,n\}.$$ Since $V^{**}$ is relatively $w^*$-open in $B_{X^{**}}$, and $B_X$ is $w^*$-dense in $B_{X^{**}}$, the set $V$ ($=V^{**}\cap B_X$) is $w^*$-dense in $V^{**}$. Having in mind that $x_0\in V\subseteq \overline{C}_X$, it follows that $$x_0\in V^{**}= \overline{V}^{w^*}\subseteq \overline{C}_X^{w^*}\subseteq
\overline{C}_{X^{**}}^{w^*}.$$
	
We  assume now that $X^{**}$ has property that for every convex combination of $w^*$-slices $C_{X^{**}}$ of $B_{X^{**}}$ and $x\in C_{X^{**}}\cap X$  there exists $W$ a relatively $w^*$-open neighborhood of $x$, such that $W\subseteq \overline{C}_{X^{**}}^{w^*}$. We claim that $X$ has property $(\overline{\hbox{\rm{P1}}})$.\smallskip
	
Let $\{S(B_X,f_i,\varepsilon_i)\}_{i=1}^k$ be slices of $B_{X}$, let $\lambda_i > 0$ with $\sum_{i=1}^k \lambda_i = 1$, and consider the convex combination of these slices $C_X=\sum_{i=1}^k \lambda_i S(B_X,f_i,\varepsilon_i)$ and $x_0\in C_X$. Put $C_{X^{**}}=\sum_{i=1}^k \lambda_i S(B_{X^{**}},f_i,\varepsilon_i)$ convex combination of $w^*$-slices of $B_{X^{**}}$. Since $x_0\in C_{X^{**}}$, by the assumption, there exists $W$ a relatively $w^*$-open neighborhood of $x$, such that $x\in W\subseteq \overline{C}_{X^{**}}^{w^*}$. Since $S(B_{X^{**}},f_i,\varepsilon_i)$ is relatively $w^*$-open in $B_{X^{**}}$, and $B_X$ is $w^*$-dense in $B_{X^{**}}$, the set $S(B_X,f_i,\varepsilon_i)$  is $w^*$-dense in $S(B_{X^{**}},f_i,\varepsilon_i)$ for every $i=1,...,n$. Therefore,  $\overline{C}_{X^{**}}^{w^*}=\overline{C}_{X}^{w^*}$ and $x\in W\subseteq \overline{C}_{X}^{w^*}$. Now, $x\in V$ where $V:= W\cap B_X$ is a relatively weakly open subset of $B_X$ and for $y\in V$ we have that $y = \sum_{i=1}^k \lambda_i z_i$ with $z_i\in S(B_{X^{**}},f_i,\varepsilon_i)$. Since $B_X$ is $w^*$-dense in $B_{Z^{**}}$, we have that $y$ is the $w^*$-limit of $\{\sum_{i=1}^k \lambda_i x_{(i,\alpha )}\}$ with $x_{(i,\alpha )}\in S(B_X,f_i,\varepsilon_i)$ in $X^{**}$. So it follows that $y$ is the $w$-limit of $\{\sum_{i=1}^k \lambda_i x_{(i,\alpha )}\}$, and since $C_X$ is convex subset of $B_X$, we conclude that $y\in \overline{C}_X$.\end{proof}

\medskip

Let $X$ be  a real Banach space and $Y$ a subspace of $X$. We recall that $Y$ is an \emph{ideal } in $X$ if $Y^\perp$, the annihilator of $Y$ in $X^*$, is the kernel of a norm one projection on $X^*$. We also recall that a closed subspace $Y$ of a Banach space $X$ is said to be \emph{locally $1$-complemented} in $X$ if, for every finite dimensional subspace $E$ of $X$ and every $\varepsilon > 0$, there exists a linear operator $P_E: E \to Y$ with $P_E x = x$ for all $x \in E \cap Y$ and $\|P_E\| \le 1 + \varepsilon$. A linear operator $\Phi: Y^* \to X^*$ is called a \emph{Hahn-Banach extension operator}, if $(\Phi y^*)(y) = y^*(y)$ and $\Vert \Phi y^*\Vert =\Vert y^*\Vert $  for all $y \in Y$ and $y^* \in Y^*$.\medskip

N. J. Kalton proved in \cite{Kal} that these three concepts are equivalent and  \AA . Lima showed that $Y$ is a ideal in $X$ if and only if  $Y^{\perp \perp }$ is the range of a norm one projection in $X^{**}$ (see \cite{Lima}).\smallskip

\begin{lemma}\cite{Kal} \cite{Lima}\label{l Kalton} Let $Y$ be a subspace of a real Banach space $X$. The following statements are equivalent:
	\begin{enumerate}
		\item $Y$ is a ideal in $X$.
		\item There exists a Hahn-Banach extension operator $\Phi: Y^* \to
		X^*$.
		\item $Y$ is locally $1$-complemented in $X$.
		\item $Y^{\perp \perp }$ is the range of a norm one projection in $X^{**}$.
	\end{enumerate}
	
\end{lemma}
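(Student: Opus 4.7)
The plan is to establish the cycle $(1)\Leftrightarrow(2)\Rightarrow(4)\Rightarrow(3)\Rightarrow(2)$, with the algebraic implications $(1)\Leftrightarrow(2)$ and $(2)\Rightarrow(4)$ being direct and the remaining two resting on the principle of local reflexivity together with a compactness (ultrafilter) argument.

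For $(1)\Rightarrow(2)$, let $P:X^*\to X^*$ be a norm-one projection with $\ker P = Y^\perp$ and consider the restriction map $R:X^*\to Y^*$, $R(x^*)=x^*|_Y$, which is a quotient map with kernel $Y^\perp$. Since $\ker P = \ker R$, the restriction of $R$ to $Z:=P(X^*)$ is a bijection onto $Y^*$; set $\Phi:=(R|_Z)^{-1}$. Then $\Phi(y^*)$ extends $y^*$ by construction, and if $\widetilde{y}^*\in X^*$ is any norm-preserving Hahn--Banach extension of $y^*$, the relation $\widetilde{y}^*-\Phi(y^*)\in Y^\perp=\ker P$ yields $\Phi(y^*) = P\widetilde{y}^*$, hence $\|\Phi(y^*)\|\leq \|y^*\|$. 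For $(2)\Rightarrow(1)$, set $P:=\Phi\circ R$; one has $P^2=\Phi(R\Phi)R=P$, $\|P\|\leq 1$, and $\ker P = \ker R = Y^\perp$. For $(2)\Rightarrow(4)$, $R$ induces the isometric isomorphism $X^*/Y^\perp\cong Y^*$, so the adjoint $R^*:Y^{**}\to X^{**}$ is an isometric embedding whose range equals $Y^{\perp\perp}$; combined with $\Phi^*R^*=(R\Phi)^*=I_{Y^{**}}$, this makes $R^*\Phi^*$ a norm-one projection of $X^{**}$ onto $Y^{\perp\perp}$.

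For $(4)\Rightarrow(3)$, given a norm-one projection $Q:X^{**}\to X^{**}$ with $Q(X^{**})=Y^{\perp\perp}$, fix a finite-dimensional $E\subseteq X$ and $\varepsilon>0$. The restriction $Q|_E$ takes $E$ into $Y^{\perp\perp}\cong Y^{**}$, and the principle of local reflexivity applied to the finite-dimensional subspace $Q(E)\subseteq Y^{**}$ produces a $(1+\varepsilon)$-isomorphism $T:Q(E)\to Y$ fixing $Q(E)\cap Y$; then $P_E:=T\circ Q|_E$ satisfies $\|P_E\|\leq 1+\varepsilon$ and coincides with the identity on $E\cap Y$, since $Q$ fixes the elements of $E\cap Y \subseteq Y^{\perp\perp}$. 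For $(3)\Rightarrow(2)$, given a net of local projections $(P_E)$ indexed by pairs $(E,\varepsilon)$, each scalar $y^*(P_E x)$ is bounded in modulus by $(1+\varepsilon)\|y^*\|\|x\|$, and taking a limit along an ultrafilter $\mathcal{U}$ on the directed set of such pairs produces a form $\Phi(y^*)(x):=\lim_{\mathcal{U}}y^*(P_E x)$. One then checks that $\Phi$ is linear in both variables, bounded by $\|y^*\|\|x\|$, and equal to $y^*(x)$ whenever $x\in Y$ (as eventually $x\in E$ and $P_E x = x$).

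The main obstacle is precisely the compactness step in $(3)\Rightarrow(2)$: one must argue that a single ultrafilter limit can be taken to yield a genuine linear operator $\Phi:Y^*\to X^*$ of norm exactly one, rather than a mere assignment of pointwise limits with norm bounded only by $1+\varepsilon$. This is the content of Kalton's theorem in \cite{Kal}, and once it is available the cycle closes, as every other step reduces to transparent manipulations with Hahn--Banach extensions, quotient maps, and their adjoints.
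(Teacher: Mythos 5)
The paper states this lemma without proof: it is quoted verbatim from Kalton \cite{Kal} (equivalence of (1), (2), (3)) and Lima \cite{Lima} (equivalence with (4)), so there is no internal argument to compare yours against. Your cycle $(1)\Leftrightarrow(2)\Rightarrow(4)\Rightarrow(3)\Rightarrow(2)$ is correct and is essentially the standard proof from those references: the identification $\Phi=(R|_{P(X^*)})^{-1}$ and the factorization $P=\Phi\circ R$ are exactly right, and $R^*\Phi^*$ is indeed a norm-one projection of $X^{**}$ onto $Y^{\perp\perp}=R^*(Y^{**})$. The two steps you single out as delicate are also fine as you have set them up: for $(4)\Rightarrow(3)$ you need the strengthened principle of local reflexivity (the version in which $T$ both has norm at most $1+\varepsilon$ and fixes $Q(E)\cap Y$ pointwise), which is standard; and for $(3)\Rightarrow(2)$ the ultrafilter limit along the directed set of pairs $(E,\varepsilon)$ really does produce an operator of norm exactly one, since for every $\delta>0$ the bound $(1+\delta)\|y^*\|\,\|x\|$ holds eventually, so the limit is at most $\|y^*\|\,\|x\|$, while the extension property forces equality of norms. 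Your closing worry that this step requires more than the sketch provides is therefore unfounded --- the argument you wrote already closes the gap.
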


The next result represents an improvement of Proposition \ref{1-complementado}, since every norm-one complemented subspace of a Banach space is an ideal.

\medskip

\begin{proposition}\label{locally-complemented-ideal}
	Let $X$ be a real Banach space with property $(\overline{\hbox{\rm{P1}}})$. Then every ideal $Y$  in $X$	 has property $(\overline{\hbox{\rm{P1}}})$.
\end{proposition}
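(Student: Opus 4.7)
The plan is to reduce to the bidual formulation of Lemma \ref{P1-bidual} and then transfer convex combinations of $w^*$-slices between $B_{Y^{**}}$ and $B_{X^{**}}$ using the Hahn-Banach extension operator supplied by Lemma \ref{l Kalton}. Concretely, fix such an operator $\Phi: Y^* \to X^*$ and consider the canonical isometric embedding $\iota: Y^{**} \to X^{**}$ given by $\iota(y^{**})(x^*) := y^{**}(x^*|_Y)$. A routine verification shows that $\iota$ and $\Phi^*: X^{**} \to Y^{**}$ are both $w^*$-to-$w^*$ continuous contractions, that $\Phi^* \circ \iota = \mathrm{id}_{Y^{**}}$, that $\iota$ restricted to $Y$ coincides with the canonical inclusion $Y \hookrightarrow X \hookrightarrow X^{**}$, and that for every $g \in Y^*$ the identities $(\Phi g) \circ \iota = g$ on $Y^{**}$ and $g \circ \Phi^* = \Phi g$ on $X^{**}$ hold.

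By Lemma \ref{P1-bidual} it suffices to show that for any convex combination of $w^*$-slices $C := \sum_{i=1}^n \lambda_i S(B_{Y^{**}}, g_i, \varepsilon_i)$ of $B_{Y^{**}}$ (so $g_i \in Y^*$) and any $y_0 \in C \cap Y$, there is a relatively $w^*$-open neighborhood $W$ of $y_0$ in $B_{Y^{**}}$ contained in $\overline{C}^{w^*}$. I set $f_i := \Phi(g_i) \in X^*$ and $\tilde{C} := \sum_{i=1}^n \lambda_i S(B_{X^{**}}, f_i, \varepsilon_i)$. Writing $y_0 = \sum_i \lambda_i y_i$ with $y_i \in S(B_{Y^{**}}, g_i, \varepsilon_i)$, the identity $f_i(\iota(y_i)) = g_i(y_i) > \|g_i\| - \varepsilon_i = \|f_i\| - \varepsilon_i$ places $\iota(y_i)$ in $S(B_{X^{**}}, f_i, \varepsilon_i)$, so $y_0 \in \tilde{C} \cap X$. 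Applying property $(\overline{\hbox{\rm{P1}}})$ for $X$ through Lemma \ref{P1-bidual} then produces a relatively $w^*$-open neighborhood $\tilde{W}$ of $y_0$ in $B_{X^{**}}$ with $\tilde{W} \subseteq \overline{\tilde{C}}^{w^*}$.

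Finally, I would set $W := \iota^{-1}(\tilde{W}) \cap B_{Y^{**}}$; by $w^*$-continuity of $\iota$ this is a relatively $w^*$-open neighborhood of $y_0$ in $B_{Y^{**}}$. To verify $W \subseteq \overline{C}^{w^*}$, pick $w \in W$: then $\iota(w) \in \overline{\tilde{C}}^{w^*}$ is a $w^*$-limit of nets of the form $\sum_i \lambda_i u_{i,\alpha}$ with $u_{i,\alpha} \in S(B_{X^{**}}, f_i, \varepsilon_i)$. The computation $g_i(\Phi^* u_{i,\alpha}) = f_i(u_{i,\alpha}) > \|f_i\| - \varepsilon_i = \|g_i\| - \varepsilon_i$ places $\Phi^*(u_{i,\alpha})$ in $S(B_{Y^{**}}, g_i, \varepsilon_i)$, so $w^*$-continuity of $\Phi^*$ together with $\Phi^* \circ \iota = \mathrm{id}_{Y^{**}}$ exhibits $w$ as a $w^*$-limit of elements $\sum_i \lambda_i \Phi^*(u_{i,\alpha}) \in C$, giving $w \in \overline{C}^{w^*}$. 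I expect the main obstacle to be organizing $\iota$ and $\Phi^*$ as a $w^*$-continuous retraction pair that intertwines the functionals $g_i$ and $f_i = \Phi(g_i)$ in both directions; once this bookkeeping is in place, the slice transfer between biduals and the conclusion follow mechanically.
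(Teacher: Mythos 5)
Your argument is correct and follows essentially the same route as the paper: both transfer the convex combination of slices to $B_{X^{**}}$ via the Hahn--Banach extension operator of Lemma \ref{l Kalton}, invoke Lemma \ref{P1-bidual} for $X$, and pull the resulting relatively $w^*$-open set back through a contractive, $w^*$-continuous adjoint (your pair satisfies $\iota\circ\Phi^*=P^*$, the very projection the paper uses, so the machinery is the same). The only difference is organizational: you also apply the converse direction of Lemma \ref{P1-bidual} to $Y$ itself, which lets you bypass the explicit $w^*$-density argument the paper carries out by hand inside $Y^{\perp\perp}=Z^*$.
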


\begin{proof}
	
Let $Y$ be an ideal in $X$. By Lemma \ref{l Kalton}, $Y$ is a locally $1$-complemented in $X$ and there exists an extension operator $\Phi : Y^* \to X^*$ with $\|\Phi\| \le 1 $. We claim that there exists $P:X^*\to X^*$ a projection onto $Z:=\Phi (Y^*)$ with $\|P\|=1$ and 	$P^*(X^{**})=Z^*$. Indeed, denote by $R_Y:X^*\to Y^*$ the natural restriction operator, $R_Y(x^*)=x^*|Y$ for $x^*\in X^*$. Then $P=\Phi R_Y:X^*\to X^*$ is a contractive projection on $X^*$ with range $Z$ and $\ker P=Y^\perp$. We also have that $P^*:X^{**}\to Y^{\perp \perp}=Z^*$ is a contractive projection.\smallskip

Let $\{S(B_Y,f_i,\varepsilon_i)\}_{i=1}^k$ be slices of $B_Y$, let $\lambda_i > 0$ with $\sum_{i=1}^k \lambda_i = 1$ and consider the convex combination of these slices $C_Y = \sum_{i=1}^k \lambda_i S(B_Y,f_i,\varepsilon_i)$.\smallskip

Let us consider $\Phi ( f_i)$ be the Hanh-Banach extension of $f_i\in B_{Y^*}$ to $B_{X^*}$. Since $\Phi (f_i)(y)=f_i(y)$ for every $y\in Y$, it is clear that $C_Y = \sum_{i=1}^k \lambda_i S(B_Y,\Phi (f_i),\varepsilon_i)$. We consider the slices	
	\begin{equation*}
		C_{Z^*} = \sum_{i=1}^k \lambda_i S(B_{Z^*},\Phi
		(f_i),\varepsilon_i)\ \ \ \mbox{and} \ \ \  C_{X^{**}} = \sum_{i=1}^k \lambda_i S(B_{X^{**}},\Phi(f_i),\varepsilon_i).
	\end{equation*}

It is clear that $S(B_{Z^*},\Phi(f_i),\varepsilon_i)\subset S(B_{X^{**}},\Phi(f_i),\varepsilon_i)$.\smallskip
	
	Given $z\in S(B_{X^{**}},\Phi(f_i),\varepsilon_i)$, we have that  $\Phi(f_i)(P^*(z))=P(\Phi(f_i))(z)=\Phi
	(f_i)(z)$, and hence $P^*(z)\in S(B_{Z^*},\Phi(f_i),\varepsilon_i)$. This implies that $P^*(S(B_{X^{**}},\Phi(f_i),\varepsilon_i))=S(B_{Z^*},\Phi(f_i),\varepsilon_i)$ and therefore $P^*(C_{X^{**}})=C_{Z^*}$.\smallskip
	
	Let  be $y_0\in C_Y$. Since $X$ has property $(\overline{\hbox{\rm{P1}}})$, by Lemma \ref{P1-bidual}, for $C_{X^{**}} $ and $y_0\in C_{Z^*} \cap Y \subseteq C_{X^{**}}	\cap X$  there exists $W$ relatively $w^*$-open neighborhood of
	$y_0$ in $X^{**}$, such that $y_0\in W\subseteq
	\overline{C}_{X^{**}}^{w^*}$.  Then $W$ contains a set  of the form
	$$V:=\{z\in B_{X^{**}}:\vert g_i(z-y_0)\vert \leq \delta \, \, \, \forall i=1,...,n\},$$ for suitable $\delta >0$, $n\in \N $, and $g_1,...,g_n\in B_{X^*}$. Since $P$ is a norm one projection on $X^*$ with range $Z$, we put $y_i^*\in B_{Y^*}$ such that $\Phi (y_i^*)=P(g_i)$ for every $ i=1,...,n$. We define a relatively weakly open neighborhood of
	$y_0$ in $Y$ by $$U:=\{y\in B_{Y}: \vert y^*_i(y-y_0)\vert <\delta \, \, \, \forall i=1,...,n \}. $$ For $y\in U$ we have that $$ \delta > \vert y^*_i(y-y_0)\vert = \vert \Phi (y_i^*)(y-y_0)\vert = \vert P(g_i)(y-y_0)\vert = \vert g_i(P^*(y-y_0))\vert ,$$ and since $y, y_0\in Y^{**}$,  it follow that $y\in V$. Put	 $$U_1:=\{y\in B_{Y}: \vert \Phi (y_i^*)(y-y_0)\vert <\delta \, \, \, \forall i=1,...,n \},$$ and
	$$U_1^{**}:=\{z\in B_{Z^*}: \vert \Phi (y_i^*)(z-y_0)\vert <\delta \, \, \, \forall i=1,...,n \}.$$
	We consider the topology $w_Z^*:=\sigma (Z^*,Z)$ in $Z^*$. Since $B_Y$ is a norming subset of $B_{Z^*}$ for $Z$, we have that  $B_Y$ is $w^*_Z$-dense in $B_{Z}$, and hence the set
	$U_1$  is $w^*_Z$-dense in $U_1^{**}$. It follows that  $U_1^{**}=	\overline{U_1}^{w^*_Z}\subseteq V \subseteq
	\overline{C}_{X^{**}}^{w^*}$.  This implies that
	\begin{equation*}
		 U_1^{**}=P^*(U_1^{**}) \subseteq P^*(\overline{C}_{X^{**}}^{w^*})=\overline{P^*(C_{X^{**}})}^{w^*}=\overline{C}_{Z^*}^{w^*_Z}.
	\end{equation*}
We have obtained that for $y_0\in C_{Y}$ there exists a relatively weakly open subset $U$ of $B_Y$ wiht $y_0\in U$ and $U=U_1\subseteq \overline{C}_{Z^*}^{w^*_Z}$. Given $y\in U$, since $\overline{C}_{Y}^{w^*_Z}=\overline{C}_{Z^*}^{w^*_Z}$, we have that $y$ is the $w^*_Z$-limit in $Z^*$ of the net $(\sum_{i=1}^n \lambda_i y_{(i,\alpha )})$ with $y_{(i,\alpha )}\in S(B_Y,\Phi (f_i),\varepsilon_i)$.  We recall that $S(B_Y,f_i,\varepsilon_i)=S(B_Y,\Phi (f_i),\varepsilon_i)$ and that  $\Phi (y^*)(y)=y^*(y)$ for all $y\in Y$ and $y^*\in Y^*$. It follows that $y$ is the $w$-limit of $(\sum_{i=1}^n \lambda_i y_{(i,\alpha )})$, and since $C_Y$ is convex subset of $B_Y$, we have that $y\in \overline{C}$. We conclude that $Y$ has property $(\overline{\hbox{\rm{P1}}})$.\end{proof}

\smallskip

As an application of the previous results, we will obtain a characterization of property $(\overline{\hbox{\rm{P1}}})$ in real $L_1$-preduals Banach spaces.

\medskip

\begin{theorem}\label{L_1-predual}
	Let $X$ be a real $L_1$-predual Banach space. Then $X$  contains no isomorphic copy of $\ell_1$ if and only if $X$ has property $(\overline{\hbox{\rm{P1}}})$.
\end{theorem}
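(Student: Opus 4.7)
The classical Pe\l{}czy\'{n}ski characterization states that a Banach space $Z$ contains an isomorphic copy of $\ell_1$ if and only if $Z^*$ contains an isomorphic copy of $L_1[0,1]$. Since $X$ is a real $L_1$-predual, $X^*$ is isometric to some $L_1(\mu)$; hence $X \not\supseteq \ell_1$ forces $L_1(\mu) \not\supseteq L_1[0,1]$, which is equivalent to $\mu$ being purely atomic, so $X^*$ is isometric to $\ell_1(\Gamma)$ for some set $\Gamma$. Since $\R$ is finite-dimensional with the $\hbox{\rm{(co)}}$ property (Proposition 2.3 in \cite{AbrBecHalLimPol}), Theorem \ref{thm-sum} then delivers $(\overline{\hbox{\rm{P1}}})$ for $X$; this is essentially the content of the corollary stated immediately after the Remark.

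\textbf{Reverse direction, reduction step.} I would argue by contrapositive. Suppose $\ell_1 \hookrightarrow X$; then by Pe\l{}czy\'{n}ski, $\mu$ has a non-trivial non-atomic part $\mu_{na}$, and the $L$-summand decomposition $L_1(\mu) = L_1(\mu_a) \oplus_1 L_1(\mu_{na})$ in $X^*$ is automatically $w^*$-closed, as $L$-summands in a dual always are. The associated $L$-projection is therefore $w^*$-continuous and equals the adjoint of a contractive $M$-projection on $X$, yielding a decomposition $X = Y_1 \oplus_\infty Y_2$ with $Y_2$ norm-one complemented in $X$ and $Y_2^* = L_1(\mu_{na})$. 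Proposition \ref{1-complementado} forces $Y_2$ to inherit $(\overline{\hbox{\rm{P1}}})$ from $X$; thus the problem reduces to ruling out $(\overline{\hbox{\rm{P1}}})$ for a real $L_1$-predual whose dual is purely non-atomic.

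\textbf{Producing the explicit obstruction.} To finish, I would exhibit a finite convex combination of slices in $B_{Y_2}$ violating the $(\overline{\hbox{\rm{P1}}})$ condition. A natural starting point is to pick $g_1, g_2 \in L_1(\mu_{na})$ of unit norm with disjoint positive-measure supports $A_1, A_2$, form slices $S_i = S(B_{Y_2}, g_i, \varepsilon)$ (possibly with sign adjustments), take $C = \tfrac{1}{2} S_1 + \tfrac{1}{2} S_2$, and select a midpoint $y_0 \in C \cap Y_2$. By Lemma \ref{P1-bidual} the problem then transfers to $B_{Y_2^{**}} = B_{L_\infty(\mu_{na})}$: one needs, for every relatively $w^*$-open neighborhood of $y_0$ carved out by finitely many $\psi_1, \ldots, \psi_n \in L_1(\mu_{na})$, to produce an $L_\infty$-element of that neighborhood lying outside $\overline{C}^{w^*}$. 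The key tool would be Lyapunov's convexity theorem, which on the non-atomic measure $\mu_{na}$ guarantees sufficient flexibility in the ranges of the finite-dimensional vector measures $B \mapsto \bigl(\int_B g_1\,d\mu_{na}, \int_B g_2\,d\mu_{na}, \int_B \psi_1\,d\mu_{na}, \ldots, \int_B \psi_n\,d\mu_{na}\bigr)$ to decouple the constraints and supply the required witness.

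The main obstacle is this final step: one must design $g_1, g_2$ and $y_0$ so that the $w^*$-closed convex hull $\overline{C}^{w^*}$ genuinely excludes a point obtainable in every prescribed $w^*$-neighborhood of $y_0$. The argument must be calibrated so that the non-atomicity of $\mu_{na}$ provides decisive measure-theoretic room---room which is simply unavailable in the atomic case, which is precisely why the Corollary delivers $(\overline{\hbox{\rm{P1}}})$ there. This calibration, rather than any single formula, is the technical heart of the reverse implication.
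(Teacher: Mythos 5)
Your forward direction is sound and essentially matches the paper's: both your Pe\l{}czy\'{n}ski argument and the paper's appeal to Haydon's theorem land on $X^*=\ell_1(\Gamma)$, after which Theorem \ref{thm-sum} finishes. The reverse direction, however, has a genuine gap at the reduction step. You assert that the $L$-decomposition $X^*=L_1(\mu_a)\oplus_1 L_1(\mu_{na})$ is automatically $w^*$-closed and that the associated $L$-projection is $w^*$-continuous, ``as $L$-summands in a dual always are.'' That principle is false: it is $M$-projections on dual spaces that are automatically $w^*$-continuous, not $L$-projections. Already for $X=c$ one has $X^*=\ell_1=\R\cdot\lim\oplus_1\ell_1(\N)$, and the band projection onto $\R\cdot\lim$ is an $L$-projection that is not $w^*$-continuous (the evaluations $\delta_k$ tend $w^*$ to $\lim$ while $P\delta_k=0$). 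More to the point here, for $X=C[0,1]$ --- a real $L_1$-predual containing $\ell_1$ --- the projection of $M[0,1]$ onto the non-atomic measures is not $w^*$-continuous (averages of point masses converge $w^*$ to Lebesgue measure), and in fact $C[0,1]$ admits no nontrivial $M$-projection at all since $[0,1]$ is connected. So the decomposition $X=Y_1\oplus_\infty Y_2$ on which your contrapositive rests simply does not exist in general. Even granting the reduction, your final step is only a programme: the slices, the point $y_0$, and the Lyapunov-based witnesses are not actually constructed, and you concede that this is the technical heart.

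For comparison, the paper sidesteps both difficulties. It passes to a separable ideal $Z$ with $\ell_1\subset Z\subset X$ (Sims--Yost together with Lemma \ref{l Kalton}), uses that separable ideals in non-separable $L_1$-preduals are again $L_1$-preduals and that property $(\overline{\hbox{\rm{P1}}})$ is inherited by ideals (Proposition \ref{locally-complemented-ideal}), then applies the Lazar--Lindenstrauss theorem --- legitimate because $Z\supseteq\ell_1$ forces $Z^*$ non-separable --- to extract a norm-one complemented copy of $C(\Delta)$ inside $Z$, and finally invokes the known failure of $(\overline{\hbox{\rm{P1}}})$ for $C(K)$ with $K$ non-scattered. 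If you wish to salvage your line of attack, the $\oplus_\infty$-splitting must be replaced by a hereditary mechanism such as ideals/local $1$-complementation; that is precisely the role Proposition \ref{locally-complemented-ideal} plays in the paper, and it also spares you from having to carry out the explicit Lyapunov construction.
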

\begin{proof} We  assume first that $X$  contains no isomorphic copy of $\ell_1$.  Then, by \cite{Haydon},  $B_{X^*}$ is the closed convex hull of the extreme point of $B_{X^*}$, and hence $X^*$ is purely atomic. This implies that $X^*=\ell _1(\Gamma )$ for some set $\Gamma$. By Theorem \ref{thm-sum}, $X$ has property $(\overline{\hbox{\rm{P1}}})$.
	
	Now suppose that $X$ is not separable, has property $(\overline{\hbox{\rm{P1}}})$  and that $X$ contains an isomorphic copy of $\ell_1$. Then there exists a separable subspace $Y$ of $X$, such that $Y$ is isomorphic to $\ell_1$. Combining \cite[theorem]{Sim-Yost} with Lemma \ref{l Kalton}, there exists a separable ideal $Z$ in $X$ such that $Y\subset Z\subset X$. In a recent work,  P. Bandyopadhyay, S. Duta and A. Sensarma prove that if $X$ is a non-separable $L_1$-predual Banach space, then every separable ideal in $X$ is an $L_1$-predual Banach space (see \cite[Theorem 2.8]{BaDuSe}). By Proposition \ref{locally-complemented-ideal}, we have that $Z$ is a separable $L_1$-predual Banach space with property $(\overline{\hbox{\rm{P1}}})$. Since $Z$ contains an isomorphic copy of $\ell_1$, we have that $Z^*$ is not separable. By a result of A. Lazar and J. Lindenstrauss \cite[Theorem 2.3]{LaLi}, $Z$ contains a norm-one complemented subspace isometric to $C(\Delta )$, the Banach space of continuous function on the Cantor discontinuum $\Delta$. Since property $(\overline{\hbox{\rm{P1}}})$ is inherited by norm-one complemented subspaces (Proposition \ref{1-complementado}), we obtain that $C(\Delta )$ has property $(\overline{\hbox{\rm{P1}}})$. This is a contradiction, since $\Delta$ is a non-scattered  compact topological space (see \cite[Theorem 3.1]{HalKuuPol}).
	
In the case that $X$  is  separable, has property $(\overline{\hbox{\rm{P1}}})$  and that $X$ contains an isomorphic copy of $\ell_1$. We argue similarly to the final part of the previous reasoning.
	
In any case, we conclude that $X$  contains no isomorphic copy of $\ell_1$.\end{proof}

\medskip

\section{Finite dimensional C$^*$-algebras have property $\hbox{\rm{(co)}}$}\label{(co)}

The following result is a generalization of Lemma 2.2 in \cite{AbrLim} to the setting of C$^*$-algebras.

\begin{lemma}\label{l norm-control of convex combinations}
Let $\mathcal{A}$ be a C$^*$-algebra. Let $x$, $y$ be two elements in $B_{\mathcal{A}}$  with $d = \|x+y\|$. Then for every $\lambda\in [0,\frac 12]$ we have $$\|\lambda x+(1-\lambda)y\|\leq\sqrt{1-(4-d^2)(\lambda-\lambda^2)} \leq 1-\frac{(4-d^2)\lambda}{4}$$
\end{lemma}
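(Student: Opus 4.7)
The plan is to reduce the norm inequality to an operator inequality in the enveloping von Neumann algebra $\mathcal A^{**}$ (with unit $\mathbf 1$), using the $C^{*}$-identity $\|z\|^{2}=\|z^{*}z\|$. Setting $w=\lambda x+(1-\lambda)y$ and expanding, then substituting $x^{*}y+y^{*}x=(x+y)^{*}(x+y)-x^{*}x-y^{*}y$, one rewrites $w^{*}w$ in the $C^{*}$-analogue of the Hilbert-space expansion
\[
w^{*}w\;=\;\lambda(2\lambda-1)\,x^{*}x+(1-\lambda)(1-2\lambda)\,y^{*}y+\lambda(1-\lambda)\,(x+y)^{*}(x+y).\qquad(\dagger)
\]

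The hypotheses translate into three operator bounds in $\mathcal A^{**}$: $x^{*}x,y^{*}y\le\mathbf 1$ because $\|x\|,\|y\|\le 1$, and $(x+y)^{*}(x+y)\le d^{2}\mathbf 1$ because $\|x+y\|=d$. In addition, the $C^{*}$-parallelogram identity $(x+y)^{*}(x+y)+(x-y)^{*}(x-y)=2(x^{*}x+y^{*}y)$ combined with $(x-y)^{*}(x-y)\ge 0$ yields the auxiliary inequality
\[
x^{*}x+y^{*}y\;\ge\;\tfrac{1}{2}(x+y)^{*}(x+y),\qquad(\ddagger)
\]
which is the $C^{*}$-algebraic replacement for the Hilbert-space identity $\|u\|^{2}+\|v\|^{2}=\tfrac{1}{2}(\|u+v\|^{2}+\|u-v\|^{2})$.

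For $\lambda\in[0,\tfrac{1}{2}]$ the coefficient $\lambda(2\lambda-1)$ of $x^{*}x$ in $(\dagger)$ is nonpositive, so the consequent inequality $x^{*}x\ge\tfrac{1}{2}(x+y)^{*}(x+y)-y^{*}y$ obtained from $(\ddagger)$ can be substituted into the first summand of $(\dagger)$ without reversing the operator order. A careful arrangement with the bounds $(x+y)^{*}(x+y)\le d^{2}\mathbf 1$ and $y^{*}y\le\mathbf 1$ then produces a scalar operator bound of the form
\[
w^{*}w\;\le\;\bigl[1-(4-d^{2})\lambda(1-\lambda)\bigr]\mathbf 1,
\]
from which the first claimed inequality $\|w\|\le\sqrt{1-(4-d^{2})\lambda(1-\lambda)}$ is immediate upon taking $\|\cdot\|$. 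The second inequality is then a direct consequence of the scalar estimate $\sqrt{1-t}\le 1-t/2$ valid on $[0,1]$ applied to $t=(4-d^{2})\lambda(1-\lambda)\in[0,1]$, combined with $\lambda(1-\lambda)\ge\lambda/2$ for $\lambda\le\tfrac{1}{2}$: these give $\sqrt{1-(4-d^{2})\lambda(1-\lambda)}\le 1-(4-d^{2})\lambda(1-\lambda)/2\le 1-(4-d^{2})\lambda/4$.

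The hard part is the substitution step: one must weigh three operator inequalities whose coefficients change sign with $\lambda$ against one another so that the parallelogram-type lower bound $(\ddagger)$ is used with exactly the right weight to recover the Hilbert-space-shape bound $1-(4-d^{2})\lambda(1-\lambda)$, rather than a coarser bound (such as $1-(4-d^{2})\lambda/2$) that a more naive single substitution produces. If the direct substitution does not already suffice, one can always pass to the GNS representation of a state $\varphi$ attaining $\varphi(w^{*}w)$ close to $\|w^{*}w\|$ and finish the estimate as in Hilbert space, the two operator inequalities $x^{*}x\le\mathbf 1$, $y^{*}y\le\mathbf 1$ translating into $\varphi(x^{*}x)\le 1$, $\varphi(y^{*}y)\le 1$.
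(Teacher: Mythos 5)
Your diagnosis of the sign problem is exactly right, and it is in fact the point where the paper's own proof breaks down: after the (correct) substitution of the cross term $\phi(x^*y+y^*x)\le d^2-\phi(x^*x)-\phi(y^*y)$, the paper bounds $(\lambda^2-\lambda(1-\lambda))\phi(x^*x)$ from above by $\lambda^2-\lambda(1-\lambda)$ using $\phi(x^*x)\le 1$; but this coefficient equals $\lambda(2\lambda-1)\le 0$ on $[0,\frac12]$, so that estimate runs in the wrong direction. Correspondingly, the first displayed inequality of the lemma is simply false: take $\mathcal{A}=\mathbb{C}$, $x=0$, $y=1$ (so $d=1$) and $\lambda=\frac14$; then $\|\lambda x+(1-\lambda)y\|=\frac34$ while $\sqrt{1-(4-d^2)(\lambda-\lambda^2)}=\frac{\sqrt{7}}{4}<\frac34$. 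Hence the ``careful arrangement'' you defer to, which is supposed to produce the operator bound $w^*w\le\bigl[1-(4-d^2)\lambda(1-\lambda)\bigr]\mathbf{1}$, cannot exist, and the GNS fallback cannot rescue it either: the obstruction in both pictures is precisely that $\phi(x^*x)$ need not be close to $1$. This unproved (and unprovable) claim is the genuine gap in your write-up.

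That said, the part of your argument that you do carry out is correct and is strictly better than the paper's: substituting $x^*x\ge\frac12(x+y)^*(x+y)-y^*y$ (your parallelogram inequality) into the nonpositive first summand of your identity $(\dagger)$ gives $w^*w\le\frac{\lambda}{2}(x+y)^*(x+y)+(1-2\lambda)\,y^*y\le\bigl(1-\frac{(4-d^2)\lambda}{2}\bigr)\mathbf{1}$, whence $\|w\|\le\sqrt{1-\frac{(4-d^2)\lambda}{2}}\le 1-\frac{(4-d^2)\lambda}{4}$. This is exactly the final bound of the lemma, and it is the only part invoked later (the proof of Theorem \ref{t Mn is (co)} uses only the estimate $1-\frac{4-d^2}{4}\mu_j$). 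So you should present your computation as proving the second inequality directly and state plainly that the intermediate square-root bound must be dropped (or its hypotheses strengthened, e.g.\ to $\phi(x^*x)=\phi(y^*y)=1$ for the relevant states, which is what makes the Hilbert-space heuristic work), rather than suggesting it follows from more delicate bookkeeping.
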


\begin{proof}
It is well-known that $4\geq d^2=\|x+y\|^2=\|(x+y)^*(x+y)\|=\sup\{ \phi((x+y)^*(x+y)): \phi \in S(\mathcal{A})\}$ where the supremum is taken on the states of $\mathcal{A}$, $S(\mathcal{A})$, and hence $\phi(x^*y+y^*x)\leq d^2-\phi(x^*x)-\phi(y^*y)$ for every $\phi \in S(\mathcal{A})$.\smallskip

Now, it is straightforward to verify that   $$\|\lambda x+(1-\lambda) y\|^2=\|(\lambda x+(1-\lambda) y)^* (\lambda x+(1-\lambda) y)\|$$ $$ = \sup \{\lambda ^2 \phi(x^*x)+ (1-\lambda)^2 \phi(y^*y) +\lambda (1-\lambda)\phi(x^*y+y^*x) : \phi\in S(\mathcal{A})\}$$ $$\leq \sup\{\lambda ^2 \phi(x^*x)+ (1-\lambda)^2 \phi(y^*y) + \lambda (1-\lambda) (d^2-\phi(x^*x)-\phi(y^*y)) : \phi\in S(\mathcal{A})\}$$  $$= \sup\{(\lambda ^2-\lambda (1-\lambda)) \phi(x^*x)+ ((1-\lambda)^2-\lambda (1-\lambda)) \phi(y^*y) + \lambda (1-\lambda) d^2 : \phi\in S(\mathcal{A})\}$$  $$ \leq (\lambda ^2-\lambda (1-\lambda))+ ((1-\lambda)^2-\lambda (1-\lambda))  + \lambda (1-\lambda) d^2= 1-(4-d^2)(\lambda-\lambda ^2).$$ The last inequality follows from the facts  $\sqrt{1+t}\leq 1+\frac{t}{2}$ for $t\geq-1$ and $\lambda-\lambda ^2\geq \frac{\lambda}{2}. $
\end{proof}

\bigskip

Given $e,f$ two partial isometries in a C$^*$-algebra $\mathcal{A}$ we say that $e\leq f $ whenever $ee^*f=e$ and $fe^*e=e$ (equivalently $f=e+(1-ee^*)f(1-e^*e)$). Given a positive element $a$ in the closed unit ball of $\mathcal{A}$ the sequence $(a^n)$ converges to a projection in $\mathcal{A}^{**}$ (in the weak$^*$-topology) called the \emph{support projection} of $a$. This projection can also be define as the biggest projection $p$ in $\mathcal{A}^{**}$ satisfying $p\leq a$. Given $x\in B_{\mathcal{A}}$ with polar decomposition $x=v|x|$ we define, $s(x)$, the \emph{support partial isometry} of $x$ (in $\mathcal{A}^{**}$) as $vp$ where $p$ is the support projection of $|x|$. It is clear that $x=s(x)+(1-s(x)s(x)^*)x(1-s(x)^*s(x))$ and $s(x)$ is the biggest partial isometry in $\mathcal{A}^{**}$ satisfying this property.\smallskip

Given a C$^*$-algebra, $\mathcal{A}$, every partial isometry $e$ in $\mathcal{A}$ induces a decomposition of $\mathcal{A}$ as the direct sum $\mathcal{A}_2(e)\oplus \mathcal{A}_1(e)\oplus \mathcal{A}_0(e)$ where $\mathcal{A}_2(e)=ee^*\mathcal{A}e^*e$, $\mathcal{A}_1(e)=ee^*\mathcal{A}(1-e^*e)\oplus (1-ee^*)\mathcal{A}e^*e$ and $\mathcal{A}_0(e)=(1-ee^*)\mathcal{A}(1-e^*e)$. The corresponding (natural) projections onto these subspaces, $P_i(e)$ $(i\in \{0,1,2\})$, are called \emph{Peirce projections}, are known to be contractive and every element in $\mathcal{A}_2(e)$ is orthogonal to any element in $\mathcal{A}_0(e)$.

\begin{lemma}\label{l support of convex combinations}
Let $\mathcal{A}$ be a C$^*$-algebra and let $x$, $y$ be two elements in $B_{\mathcal{A}}$. Then there exists a partial isometry $e $ in $\mathcal{A}^{**}$ satisfying $$ \lambda x+ (1-\lambda)y= e+ P_0(e)(\lambda x+ (1-\lambda)y) \hbox{ for all } \lambda \in ]0,1[, $$ and being maximal for this property. \smallskip

In particular, when $\mathcal{A}$ is a finite dimensional C$^*$-algebra we have that $e$ is a finite rank partial isometry in $\mathcal{A}$ and $\|P_0(e)(\lambda x+ (1-\lambda)y)\|<1$ for all $ \lambda \in ]0,1[$.
\end{lemma}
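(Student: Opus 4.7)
First I rewrite the identity. Applying the Peirce projections $P_1(e)$ and $P_2(e)$ to both sides of
$$\lambda x + (1-\lambda) y = e + P_0(e)(\lambda x + (1-\lambda)y), \qquad \lambda \in ]0,1[,$$
yields two affine expressions in $\lambda$ whose right-hand sides are constant, and comparing the coefficient of $\lambda$ with the constant term forces $P_2(e)x = P_2(e)y = e$ and $P_1(e)x = P_1(e)y = 0$. By the characterization of $s(z)$ as the biggest partial isometry in $\mathcal{A}^{**}$ verifying $z = s(z) + P_0(s(z))z$ (a verification that the converse direction also holds is a one-line Peirce computation using $ee^*\leq s(z)s(z)^*$), these four equalities are in turn equivalent to $e \leq s(x)$ and $e \leq s(y)$. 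The lemma thus reduces to exhibiting a maximal common lower bound of $s(x)$ and $s(y)$ in the partial order $\leq$ on partial isometries of $\mathcal{A}^{**}$.

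Second, existence via Zorn. Let $E := \{ e\in\mathcal{A}^{**} : e \hbox{ is a partial isometry},\, e \leq s(x),\, e \leq s(y) \}$, which is nonempty since $0\in E$. For a chain $(e_\alpha) \subseteq E$, the final and initial projections $q_\alpha := e_\alpha e_\alpha^*$ and $p_\alpha := e_\alpha^* e_\alpha$ are increasing bounded nets in $\mathcal{A}^{**}$, hence converge strongly to projections $q$ and $p$. The relations $e_\alpha \leq s(x)$ and $e_\alpha \leq s(y)$ translate into $e_\alpha = q_\alpha s(x) = q_\alpha s(y)$ and $p_\alpha = s(x)^* q_\alpha s(x)$, so passage to strong limits produces $e := qs(x) = qs(y)$ and $p = s(x)^* q s(x)$. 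Since $a \mapsto s(x)^* a s(x)$ is a $^*$-isomorphism between the corners $s(x)s(x)^*\mathcal{A}^{**}s(x)s(x)^*$ and $s(x)^*s(x)\mathcal{A}^{**}s(x)^*s(x)$, the identity $p = s(x)^* q s(x)$ guarantees that $e$ is a partial isometry with $ee^* = q$ and $e^*e = p$; a direct calculation (using $qs(x)s(x)^* = q$, which follows from $q\leq s(x)s(x)^*$) then gives $e_\alpha \leq e \leq s(x)$, $e\leq s(y)$, so $e$ bounds the chain above in $E$. Zorn then furnishes a maximal element $e \in E$, proving the first assertion. When $\mathcal{A}$ is finite dimensional, $\mathcal{A}^{**} = \mathcal{A}$, so $e\in\mathcal{A}$ and is automatically of finite rank.

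Third, the strict norm inequality. Assume toward contradiction that $\|P_0(e)(\lambda_0 x + (1-\lambda_0)y)\| = 1$ for some $\lambda_0 \in ]0,1[$; after swapping $x$ and $y$ if needed we may suppose $\lambda_0 \leq 1/2$. Set $u := P_0(e)x$ and $v := P_0(e)y$, both in $B_\mathcal{A}$. Lemma \ref{l norm-control of convex combinations} applied to $u,v$ then forces $\|u+v\| = 2$. Embedding $\mathcal{A}$ into $B(H)$ for $H$ finite dimensional, compactness of $B_H$ yields a unit vector $\xi \in H$ with $\|(u+v)\xi\| = 2$; combined with $\|u\xi\|, \|v\xi\| \leq 1$, strict convexity of the Hilbert norm gives $u\xi = v\xi$ and $\|u\xi\| = \|v\xi\| = 1$. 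Therefore $\xi$ lies in $L := \mathrm{range}(R) \cap \ker(s(u)-s(v))$, where $R := s(u)^*s(u) \wedge s(v)^*s(v)$; the orthogonal projection $Q$ onto $L$ is then a nonzero element of $\mathcal{A}$, below $R$, on which $s(u)$ and $s(v)$ agree, so $s := s(u)Q = s(v)Q$ is a nonzero partial isometry in $\mathcal{A}_0(e)$ satisfying $s \leq s(u)$, $s \leq s(v)$, and orthogonal to $e$. Consequently $e + s$ is a partial isometry in $\mathcal{A}$ strictly greater than $e$ in the order on partial isometries and still lying in $E$, contradicting the maximality of $e$.

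The main obstacle is the last paragraph, specifically the extraction of a common nonzero partial isometry below $s(u)$ and $s(v)$ from the mere norm equality $\|u+v\| = 2$; for this, the finite dimensionality is essential, entering via compactness and strict convexity of the Hilbert norm. The rest consists of routine Peirce decomposition manipulations, monotone convergence of projection nets in $\mathcal{A}^{**}$, and basic properties of the support partial isometry.
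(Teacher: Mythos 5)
Your argument is correct, but it takes a genuinely different route from the paper's. The paper simply sets $e=s(\lambda_0 x+(1-\lambda_0)y)$ for one fixed $\lambda_0$ and invokes the Akemann--Pedersen theorem that $\mathcal{F}_e=(e+\mathcal{A}_0^{**}(e))\cap B_{\mathcal{A}}$ is a norm-closed \emph{face} of the unit ball: since a strict convex combination of $x$ and $y$ lies in $\mathcal{F}_e$, so do $x$ and $y$, hence so does every convex combination, and the arbitrariness of $\lambda_0$ gives $s(\lambda x+(1-\lambda)y)=e$ for all $\lambda\in\; ]0,1[$. This makes $e$ not merely maximal but the \emph{maximum} among partial isometries with the stated property and identifies it explicitly as the support partial isometry of each strict convex combination, after which the finite-dimensional assertions are immediate from the singular-value description of supports. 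Your route --- reducing the identity to $e\leq s(x)$ and $e\leq s(y)$ by comparing coefficients of $\lambda$, producing a maximal common lower bound by Zorn, and then obtaining the strict norm bound by contradiction via Lemma \ref{l norm-control of convex combinations} together with attainment of the norm and strict convexity in a finite-dimensional Hilbert space --- does check out: in particular the key hidden identity $s(x)=e+s(P_0(e)x)$ guarantees that your perturbation $e+s$ really stays below $s(x)$ and $s(y)$, so the contradiction with maximality is genuine. What your version buys is independence from the facial-structure machinery of \cite{AkePed}; what it gives up is the identification of $e$ with $s(\lambda x+(1-\lambda)y)$, which is the form of the lemma actually invoked in the proof of Theorem \ref{t Mn is (co)} (``denoting by $e$ the support partial isometry of $\sum_{i}\lambda_i x_i$\,\dots''). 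If you want your proof to serve that application, add the one extra observation that any partial isometry with the property satisfies $v\leq s(\lambda_0 x+(1-\lambda_0)y)$, so that your maximal $e$ is forced to coincide with this support.
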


\begin{proof}
Fix $\lambda_0\in ]0,1[$ and set $e=s(\lambda_0 x+(1-\lambda_0)y)\in \mathcal{A}^{**}$ whenever $\|\lambda_0 x+(1-\lambda_0)y\|=1$ and $e=0$ in other case. In case $e=0$ the desired equality is trivially satisfied for every $\lambda \in ]0,1[$ thus we can assume that  $\|\lambda_0 x+(1-\lambda_0)y\|=1$ and  $e=s(\lambda_0 x+(1-\lambda_0)y)$.

Since $\mathcal{F}_e=(e+\mathcal{A}_0^{**}(e))\cap \mathcal{B}_{\mathcal{A}}$ is a norm-closed face in the closed unit ball of $\mathcal{A}$ \cite[Theorem 4.10]{AkePed} we have that  $\lambda x+ (1-\lambda)y \in \mathcal{F}_e$ for every $\lambda \in [0,1]$. Therefore $s(\lambda x+ (1-\lambda)y)\geq e=s(\lambda_0 x+(1-\lambda_0)y)$ for every $\lambda \in ]0,1[$. The arbitrariness of $\lambda_0$ gives $s(\lambda x+ (1-\lambda)y)=e$ for every $\lambda \in ]0,1[$ showing the maximality of $e$ among the partial isometries satisfying this property.\smallskip

The final comments should be clear from properties of the support partial isometry of a norm-one element in a finite dimensional C$^*$-algebra (see for example \cite[Theorem 3.1]{Harris81} or \cite[page 19]{Bha}).
\end{proof}

%It can be easily derived from this result above that  $\|\lambda_0 x+(1-\lambda_0)y\|=1$ for %some $\lambda_0\in ]0,1[$ if and only if  $\|\lambda x+(1-\lambda)y\|=1$ for every $\lambda\in %[0,1].$\smallskip

The following result is probably part of the folklore. We include here a proof for the sake of completeness.

\begin{lemma}\label{l geometric properties of partial isometries}
Let $\mathcal{A}$ be a C$^*$-algebra and let $e,f$ be two partial isometries in $\mathcal{A}$ such that $\|e-f\|\leq\varepsilon$, where $\varepsilon$ is positive. Then the following inequalities hold:
\begin{itemize}
\item[(\rm{a)}] $ \|P_2(e)-P_2(f)\|\leq 4 \varepsilon$, $ \|P_1(e)-P_1(f)\|\leq 8 \varepsilon$, $ \|P_0(e)-P_0(f)\|\leq 4 \varepsilon$.
\item [(\rm{b)}] $\|P_k(u)P_j(v)\|\leq 4\varepsilon$ where $u, v \in \{e,f\}$ distinct and $k,j\in \{0,1,2\}$ are distinct.
\end{itemize}
 In particular, given a norm one element $x\in \mathcal{A}$, satisfying $x=e+P_0(e)x$ we have that $ \|P_1(f)x\|,\|P_2(f)x-f\|,\|x- (f+P_0(f)x)\| \leq 5\varepsilon.$

\end{lemma}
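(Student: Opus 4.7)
The whole argument rests on a few elementary estimates for the source and range projections of $e$ and $f$. Writing $ee^*-ff^* = (e-f)e^* + f(e-f)^*$ and using $\|e\|,\|f\|\le 1$ gives $\|ee^*-ff^*\|\le 2\varepsilon$, and symmetrically $\|e^*e-f^*f\|\le 2\varepsilon$. Moreover, since $ff^*(1-ff^*)=0$, we have $ff^*(1-ee^*) = ff^*(ff^*-ee^*)$, hence $\|ff^*(1-ee^*)\|\le 2\varepsilon$; analogously $\|(1-ff^*)ee^*\|$, $\|f^*f(1-e^*e)\|$ and $\|(1-e^*e)f^*f\|$ are each at most $2\varepsilon$. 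These estimates, together with their symmetric versions obtained by interchanging $e$ and $f$, drive the entire proof.

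For part (a) I would carry out a routine telescoping. For instance,
\[
P_2(e)a-P_2(f)a = (ee^*-ff^*)\,a\,e^*e + ff^*\,a\,(e^*e-f^*f),
\]
yielding $\|P_2(e)-P_2(f)\|\le 4\varepsilon$; the argument for $P_0$ is identical. For $P_1$, split into the two summands $ee^*a(1-e^*e)$ and $(1-ee^*)ae^*e$, do the same telescoping in each and add, giving $8\varepsilon$. For part (b), each product $P_k(u)P_j(v)x$ with $u\neq v$ and $k\neq j$ expands as a sandwich expression containing a factor of the form $ff^*(1-ee^*)$, $(1-e^*e)f^*f$, etc. For example $P_2(e)P_0(f)x = ee^*(1-ff^*)x(1-f^*f)e^*e$, so $\|P_2(e)P_0(f)\|\le \|ee^*(1-ff^*)\|\le 2\varepsilon$. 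Mixed products involving $P_1$ have two such summands, giving the stated bound $4\varepsilon$; exploiting the $C^*$-orthogonality of the two halves of $\mathcal{A}_1$ sharpens this to $2\varepsilon$ where useful.

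For the last clause, assume $x=e+P_0(e)x$ with $\|x\|=1$. Expanding $P_1(f)x = ff^*x(1-f^*f) + (1-ff^*)xf^*f$ as a sum of two $C^*$-orthogonal elements and substituting $x=e+P_0(e)x$ bounds each half by $\varepsilon+2\varepsilon=3\varepsilon$, so $\|P_1(f)x\|\le 3\varepsilon$. For $\|P_2(f)x-f\|$, a direct computation gives
\[
P_2(f)x-f = ff^*(e-f)f^*f + ff^*(1-ee^*)x(1-e^*e)f^*f,
\]
of norm at most $\varepsilon + 4\varepsilon^2\le 5\varepsilon$ when $\varepsilon\le 1$ (and trivially so otherwise).

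The delicate point is $\|x-(f+P_0(f)x)\|$. Setting $y := x-f-P_0(f)x$ and expanding $P_0(f)x$, one obtains $y = (ff^*x-f) + (1-ff^*)xf^*f =: C+B$. The key algebraic observation is that $C^*B = 0$: indeed,
\[
C^*B = (x^*ff^*-f^*)(1-ff^*)xf^*f,
\]
and both $x^*ff^*(1-ff^*)=0$ and $f^*(1-ff^*)=f^*-f^*ff^*=0$ vanish. Therefore $y^*y = C^*C+B^*B$, and since both summands are positive,
\[
\|y\|^2 = \|C^*C+B^*B\|\le \|C\|^2+\|B\|^2.
\]
Separately, substituting $x=e+P_0(e)x$ gives $C = ff^*(e-f) + ff^*(1-ee^*)x(1-e^*e)$ and $B = (1-ff^*)(e-f)f^*f + (1-ff^*)(1-ee^*)x(1-e^*e)f^*f$, each bounded by $\varepsilon+2\varepsilon=3\varepsilon$ via the key estimates. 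Consequently $\|y\|\le 3\sqrt{2}\,\varepsilon < 5\varepsilon$. The main obstacle is precisely to detect this Pythagorean orthogonality $C^*B=0$; a naive triangle inequality would only give $\|C\|+\|B\|\le 6\varepsilon$, and the finer Peirce decomposition of $y$ into an $\mathcal{A}_2(f)$ part and an $\mathcal{A}_1(f)$ part fails for the same reason.
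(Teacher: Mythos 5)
Your argument is correct, but it diverges from the paper's proof in two places, and in the last one you work harder than necessary. For part (a) you do essentially what the paper does: the paper telescopes $ee^*xe^*e-ff^*xf^*f$ in four single-letter swaps of $\varepsilon$ each, while you group them into the two differences $ee^*-ff^*$ and $e^*e-f^*f$, each of norm $\le 2\varepsilon$; same bound, same idea. For part (b) the paper simply writes $P_k(e)P_j(f)=\bigl(P_k(e)-P_k(f)\bigr)P_j(f)$ (using $P_k(f)P_j(f)=0$ for $k\neq j$) and quotes (a), whereas you recompute each product from the estimates $\|ee^*(1-ff^*)\|\le 2\varepsilon$, etc.; your route is more computational but sound. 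The real difference is the third inequality of the final clause. You present the identity $C^*B=0$ and the resulting Pythagorean bound $\|y\|\le\sqrt{\|C\|^2+\|B\|^2}\le 3\sqrt2\,\varepsilon$ as the ``delicate point,'' claiming a triangle inequality only yields $6\varepsilon$. That is true for your decomposition $y=C+B$, but the paper avoids the issue entirely with the one-line identity
\[
x-\bigl(f+P_0(f)x\bigr)=(e-f)+\bigl(P_0(e)-P_0(f)\bigr)x,
\]
which gives $\varepsilon+4\varepsilon=5\varepsilon$ directly from (a); likewise $P_1(f)x=P_1(f)(e-f)+P_1(f)P_0(e)x$ and $P_2(f)x-f=P_2(f)(e-f)+P_2(f)P_0(e)x$ give the other two bounds from (a) and (b) with no orthogonality needed. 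So your orthogonality observation is a genuine (and slightly sharper, $3\sqrt2\,\varepsilon<5\varepsilon$) alternative, but it is not the obstacle you make it out to be; the intended proof is a routine consequence of (a) and (b). One small caveat: your bound $\varepsilon+4\varepsilon^2$ for $\|P_2(f)x-f\|$ needs the case split on $\varepsilon\le 1$ that you mention; using $\|x\|=1$ to absorb one of the two $2\varepsilon$ factors gives $3\varepsilon$ uniformly and is cleaner.
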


\begin{proof}

(\rm{a)} Given $x\in \mathcal{A}$ with $\|x\|=1$, we have that $\|ee^*xe^*e-ff^*xf^*f\|\leq \|(e-f)e^*xe^*e\|+ \|fe^*xe^*e-ff^*xf^*f\| \leq$ $ \varepsilon+ \|f(e^*-f^*)xe^*e\|+ \|ff^*xe^*e-ff^*xf^*f\|\leq 2\varepsilon+ \|ff^*x(e^*-f^*)e\|+ \|ff^*xf^*e-ff^*xf^*f\|\leq  3\varepsilon+ \|ff^*xf^*(e-f)\|\leq 4 \varepsilon$. Since $P_0(e)x=(1-ee^*)x(1-e^*e)$ and $P_1(e)x= (1-ee^*)xe^*e+ee^*x(1-e^*e)$ the rest of inequalities follows in the same manner.

(\rm{b)} $\|P_2(e)P_1(f)\|=\|(P_2(f)-P_2(e))P_1(f)\|\leq 4 \varepsilon$ by \rm{(a)}. The rest of cases can be obtained analogously.

Finally, given $x\in \mathcal{A}$ a norm one element with $x=e+P_0(e)x$, we have that $P_1(f)x=P_1(f)(e-f)+P_1(f)P_2(f)x$, $P_2(f)x-f=P_2(f)(e-f)+P_2(f)P_0(e)x$ and $x-(f+P_0(f)x)=(e-f)+(P_0(e)-P_0(f))x$ thus (\rm{a)} and (\rm{b)} applies.

\end{proof}

The following two remarks contains information concerning perturbation theory in C$^*$-algebras.

\begin{remark}\label{r Ogasawara}

Let $\mathcal{A}$ be a C$^*$-algebra. It is well-known that the absolute value is a continuous function on $\mathcal{A}$, where the absolute value of an element $x$ is the square root of $x^*x$ and is denoted by  $|x|$. Concretely, given $x,y\in \mathcal{A}$ we have that $\| |x|^2-|y|^2 \|=\|x^*x-y^*y\|=\|x^*(x-y)+(x^*-y^*)y\|\leq \|x-y\|(\|x\|+\|y\|)$ and by
\cite[Theorem]{Oga55} $$\| |x|-|y| \|\leq \sqrt{\|x-y\|(\|x\|+\|y\|)}.$$ Therefore we have that small perturbations of an element in a C$^*$-algebra give rise to small perturbations of its absolute value.
\end{remark}\medskip

\begin{remark}\label{r Davis}
Let $\mathcal{A}$ be a finite dimensional C$^*$-algebra. Given $x\in \mathcal{A}$ there exist a unitary $u\in \mathcal{A}$ such that $x=u|x|$. The eigenvalues of $|x|$ are called the \emph{singular values} of $x$ and we may consider them as a vector  $Sing(x)=\{\sigma_1(x),\ldots,\sigma_n(x)\}\in \mathbb{R}^n$, where $n$ is the rank of $\mathcal{A}$, counting this eigenvalues with multiplicity and in decreasing order. \smallskip

Given $x,y\in \mathcal{A}$ there exists a connection between the distance of the corresponding singular values of $x$ and $y$ and the distance between $x$ and $y$. Concretely, $\max\{|\sigma_i(x)-\sigma_i(y)|: i\in \{1,\ldots,n\}\}\leq \|x-y\|$ (see \cite[Theorem 9.8]{Bha}). There is no such a relation, in general, between the distance of the corresponding eigenvectors (see for example \cite[page 46]{BhaDavMcI}). However, for some particular small perturbations of positive elements we get small changes in the corresponding spectral resolutions.\smallskip

More concretely, take a positive element, $h$,  in a finite dimensional C$^*$-algebra $\mathcal{A}$. Let $\beta, \gamma >0$ and assume  $p$ is the spectral resolution of $h$ associated to the set $[\nu, \mu]$ where $\mu-\nu\leq 2\beta$ and the sets $]\nu-\gamma,\nu[$, $]\mu,\mu+\gamma[$ contains no eigenvalues of $h$. Given a positive  $b\in \mathcal{A}$ with $\|b-h\|\leq \delta<\frac{\gamma}{2}$ and $q$ the projection of $b$ associated to $p$ (i.e. $q$ is the spectral resolution of $[\nu-\delta, \mu+\delta])$, C. Davis obtained in \cite[Theorem 2.1]{Dav} that $$ \|q(1-p)\|=\|(1-p)q\|\leq \frac{\beta +\delta}{\beta +\gamma-\delta}.$$ Whenever $\delta<\frac{\gamma}{4}$, applying \cite[Theorem 2.1]{Dav} to $q$ and $p$ with the new parameters  $\beta'=\beta+\delta$ and $\gamma'=\frac{\gamma}{2}$ we have that $$\|p(1-q)\|=\|(1-q)p\|\leq \frac{\beta +2\delta}{\beta +\frac{\gamma}{2}}.$$ In the particular case of $\beta=0$ ($p$ is the spectral resolution of a single eigenvalue) and $\delta<\frac{\gamma}{4}$ we have that $$\|p-q\|=\|p(1-q)-(1-p)q\|\leq \|p(1-q)\|+\|(1-p)q\|\leq  $$ $$\frac{\delta}{\gamma-\delta}+ \frac{4\delta}{\gamma}\leq  \frac{16}{3} \frac{\delta}{\gamma}$$ (compare with \cite[Theorem 2]{Vas} where a different bound is given).

\end{remark}\medskip

For our purposes, given an element $x$ in a finite dimensional C$^*$-algebra, $\mathcal{A}$, of rank $n$, it will be more convenient to express $x$ as the sum  $\sum_{i=1}^{n_0} \sigma_i(x) e_i$ where $\{\sigma_i(x): i\in \{1,\ldots,n_0\}\}$ are the singular values of $x$ (eigenvalues of $|x|$) taken in decreasing order not counting multiplicity. Moreover, $x=u|x|$ with $u$ unitary in $\mathcal{A}$,  $e_i=up_i$ is a finite rank partial isometry in $\mathcal{A}$ with $p_i$ the spectral resolution of $|x|$ associated to the singular value $\sigma_i(x)$ and $\sum_{i=1}^{n_0} rank(e_i)=n$ (see for example \cite[Theorem 3.1]{Harris81}).\smallskip

Having in mind Remark \ref{r Davis} above, we have that, once fixed a positive $\delta$ smaller than one half the distance between the elements of the union of the singular values of $x$ and 0, for every element $y$ satisfying $\|y-x\|\leq \delta$, associated to each $p_i$ we have a projection $q_i$, the spectral resolution of $|y|$ with respect to the set $[\sigma_i(x)-\delta,\sigma_i(x)+\delta]$. Notice that $|y|$ does not coincide in general with a linear combination of this projections. Therefore, for each $i\in \{1,\ldots,n_0\} $, we have associated a partial isometry  $f_i=vq_i$, where  $v$ is the unitary such that $y=v|y|$. \medskip

The following result exhibits the continuity at some fixed point of the perturbed spectral resolutions.

\begin{theorem}\label{t continuity of spectral resolutions}
Let $\mathcal{A}$ be a finite dimensional C$^*$-algebra. Let $ x$ be an element in $\mathcal{A}$. Then for every positive $\varepsilon$ there exists $\delta>0$ such that for every $y$ in the closed ball centered at $x$ of radius $\delta$, $\|e_i-f_i\|\leq\varepsilon $ for each $i\in \{1,\ldots,n_0\}$ such that $\sigma_i(x)>0$.

\end{theorem}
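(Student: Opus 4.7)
The plan is to reduce the partial isometry comparison $\|e_i - f_i\|$ to the spectral projection comparison $\|p_i - q_i\|$, which is controlled by Remark \ref{r Davis}. The key identity, valid for every $i$ with $\sigma_i(x) > 0$, is
\begin{equation*}
x p_i = u |x| p_i = \sigma_i(x)\, u p_i = \sigma_i(x)\, e_i,
\end{equation*}
so that $e_i = \sigma_i(x)^{-1} x p_i$ is intrinsic to the pair $(x,p_i)$, independent of the choice of unitary extension $u$. In the same spirit, since the spectrum of $|y|$ restricted to the range of $q_i$ lies in $[\sigma_i(x)-\delta,\sigma_i(x)+\delta]$, one has $\|y q_i - \sigma_i(x) f_i\| = \|v(|y|q_i - \sigma_i(x)q_i)\| \leq \delta$, again independently of the extension $v$.

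Given $\varepsilon > 0$, I would first fix $\gamma > 0$ smaller than half of the minimum pairwise distance between the elements of $\{\sigma_1(x),\ldots,\sigma_{n_0}(x)\} \cup \{0\}$. This makes every positive singular value of $x$ an isolated eigenvalue of $|x|$ with spectral gap at least $\gamma$, so that Remark \ref{r Davis} applies to each such $p_i$ with $\beta = 0$. Next, I would choose $\delta > 0$ small enough that $\delta_0 := \sqrt{\delta(2\|x\|+\delta)} < \gamma/4$ and that
\begin{equation*}
\sigma_i(x)^{-1}\left( 2\delta + (\|x\|+\delta)\cdot \frac{16\delta_0}{3\gamma} \right) \leq \varepsilon
\end{equation*}
for every $i$ with $\sigma_i(x) > 0$; this is possible because $\delta_0 \to 0$ as $\delta \to 0$ and there are only finitely many such $i$.

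For $y$ with $\|y - x\| \leq \delta$, Remark \ref{r Ogasawara} gives $\||y|-|x|\| \leq \delta_0$, and Remark \ref{r Davis} then yields $\|p_i - q_i\| \leq \frac{16\delta_0}{3\gamma}$. Using $\|y\| \leq \|x\|+\delta$ together with the two identities above, the triangle inequality produces
\begin{equation*}
\sigma_i(x)\|e_i - f_i\| \leq \|xp_i - yq_i\| + \delta \leq \|(x-y)p_i\| + \|y(p_i - q_i)\| + \delta,
\end{equation*}
which is at most $2\delta + (\|x\|+\delta)\cdot \frac{16\delta_0}{3\gamma}$, hence at most $\sigma_i(x)\varepsilon$ by the choice of $\delta$, giving $\|e_i - f_i\| \leq \varepsilon$.

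The main subtlety I foresee is reconciling the two prescriptions of the perturbed spectral projection: the theorem defines $q_i$ through the interval $[\sigma_i(x)-\|x-y\|,\sigma_i(x)+\|x-y\|]$, whereas Remark \ref{r Davis} uses $[\sigma_i(x)-\||x|-|y|\|,\sigma_i(x)+\||x|-|y|\|]$. Since the singular values of $|y|$ lie within $\|x-y\|$ of those of $|x|$ (the Weyl-type bound recalled in Remark \ref{r Davis}), and the choice of $\gamma$ keeps the positive singular values of $|x|$ separated from each other and from $0$ by at least $2\gamma$, both intervals capture exactly the same cluster of eigenvalues of $|y|$ originating from $\sigma_i(x)$. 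Consequently the two spectral projections coincide, and Remark \ref{r Davis} can be invoked as stated.
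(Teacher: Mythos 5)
Your proof is correct and takes essentially the same route as the paper's: both combine Remark \ref{r Ogasawara} with the Davis bound of Remark \ref{r Davis} to control $\|p_i-q_i\|$, and both exploit the eigenvalue relation $|x|p_i=\sigma_i(x)p_i$ (equivalently $xp_i=\sigma_i(x)e_i$) to transfer that control to the partial isometries at the cost of a factor $\sigma_i(x)^{-1}$. The differences are cosmetic --- the paper arranges the final triangle inequality through $\|q_i-v^*up_i\|$ rather than $\|xp_i-yq_i\|$ --- and your explicit reconciliation of the two defining intervals for $q_i$ is a point the paper leaves implicit.
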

\begin{proof}

We define $\gamma$ as the distance between the elements of the union of the singular values of $x$ and 0. Let $\delta$ be a positive number satisfying $$\max\{\frac{16}{3}\frac{\sqrt{(2\|x\|+\delta)\delta}}{\gamma}+\frac{1}{\sigma_i(x)}(\sqrt{(2\|x\|+\delta)\delta}+ \delta) : \sigma_i(x(>0\}\leq \varepsilon. $$

Take $y\in \mathcal{A}$ with $\|x-y\|\leq \delta$. Let $n_0$, $e_i$, $f_i$, $p_i$ and $q_i$ be defined as in the comments preceding this theorem. By Remark \ref{r Ogasawara} we have that $\| |x|-|y| \| \leq \sqrt{(2\|x\|+\delta)\delta}$. Now,  Remark \ref{r Davis} gives $\displaystyle \|p_i-q_i\| \leq \frac{16}{3}\frac{\sqrt{(2\|x\|+\delta)\delta}}{\gamma}$ $\forall i\in \{1,\ldots,n_0\}$. By polar decomposition we have that  $x=u|x|$ and $y=v|y|$ where $u$ and $v$ are unitaries in $\mathcal{A}$ with $e_i=up_i$ and $f_i=vq_i$.\smallskip

Clearly, $|x|p_i=\sigma_i(x) p_i$ and it is straightforward to check that  $\||x|-v^*x\|=\| |x|-|y|+|y|-v^*x \| \leq \| |x|-|y|\|+ \| |y|-v^*x \| \leq \| |x|-|y|\| + \| y-x\|\leq \sqrt{(2\|x\|+\delta)\delta}+ \delta$.\smallskip

Moreover, for each $\sigma_i(x)>0$ we have $$ \|f_i-e_i \|=\|q_i-v^*up_i \|=\|q_i-p_i+p_i-v^*up_i\|\leq \|q_i-p_i\| + \|p_i-v^*up_i\|\leq $$ $$ \|q_i-p_i\| +\|(|x|-v^*u|x|)\frac{p_i}{\sigma_i(x)}\|\leq \|q_i-p_i\| +\frac{1}{\sigma_i(x)}\||x|-v^*x\|  \leq $$ $$ \frac{16}{3}\frac{\sqrt{(2\|x\|+\delta)\delta}}{\gamma}+\frac{1}{\sigma_i(x)}(\sqrt{(2\|x\|+\delta)\delta}+ \delta)\leq \varepsilon.$$

\end{proof}\medskip

It would be convenient to highlight and isolate a particular case of Theorem \ref{t continuity of spectral resolutions}.

\begin{remark}\label{r perturbation of supports}
 Let $\mathcal{A}$ be a finite dimensional C$^*$-algebra. Let $x, y$ be elements in the closed unit ball of $\mathcal{A}$. Assuming that $\|x\|=1$, we denote by  $e=s(x)$ the support partial isometry of $x$ (i.e. $\gamma_1(x)=1$ and $e=e_1$), and $\gamma =1 -\|x-s(x)\|$. Let $\delta$ be a positive number with $\delta<\sqrt{2\delta}<\frac{\gamma}{4}$ and suppose that $\|x-y\|\leq \delta$. Denoting by $f$ the spectral resolution of $y$ corresponding to the set $[1-\delta,1]$ we have that  $$\|e-f\|\leq \delta+\sqrt{2\delta}+\frac{16}{3}\frac{\sqrt{2\delta}}{\gamma}.$$
\end{remark}

We are now in a position to prove the main result of this section. The proof is highly influenced by the proof of Theorem 2.3 in \cite{AbrLim}.\medskip

\begin{theorem}\label{t Mn is (co)}
Every finite dimensional C$^*$-algebra  has property $\hbox{\rm{(co)}}$.
\end{theorem}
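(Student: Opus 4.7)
The plan is to adapt the argument of \cite[Theorem 2.3]{AbrLim} by replacing its pointwise manipulation of scalar functions with Peirce-decomposition manipulations relative to a continuously varying partial isometry $f(y)$ that tracks the support of $y$. The key inputs are the perturbation theory for supports (Theorem~\ref{t continuity of spectral resolutions} and Remark~\ref{r perturbation of supports}), the continuity estimates for Peirce projections (Lemma~\ref{l geometric properties of partial isometries}), and the strict inequality furnished by Lemma~\ref{l support of convex combinations}.

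Set $x_0=\sum_{i=1}^n\lambda_i x_i$ and let $e$ be the support partial isometry of $x_0$ when $\|x_0\|=1$, and $e=0$ otherwise. Since $\mathcal{F}_e=(e+\mathcal{A}_0(e))\cap B_{\mathcal{A}}$ is a norm-closed face of $B_{\mathcal{A}}$ containing $x_0$ and the $x_i$ supply a convex decomposition of $x_0$, all of them lie in $\mathcal{F}_e$; that is, $x_i=e+P_0(e)x_i$ with $\|P_0(e)x_i\|\leq 1$. Lemma~\ref{l support of convex combinations} moreover provides the strict gap $\|P_0(e)x_0\|<1$, the finite-dimensional \emph{slack} that will be exploited below.

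For $y$ in a small ball around $x_0$ in $B_{\mathcal{A}}$, let $f(y)\in\mathcal{A}$ be the perturbed support supplied by Remark~\ref{r perturbation of supports}; this is a continuous map with $f(x_0)=e$ and $\|f(y)-e\|$ arbitrarily small once the radius $\delta$ is chosen small enough. Define
\[
\Phi_i(y):=y+P_0(f(y))(x_i-x_0).
\]
The identity $\sum_{i=1}^n\lambda_i\Phi_i(y)=y$ follows from $\sum_{i=1}^n\lambda_i(x_i-x_0)=0$; $\Phi_i(x_0)=x_i$ because $x_i-x_0\in\mathcal{A}_0(e)$ forces $P_0(e)(x_i-x_0)=x_i-x_0$; continuity of $\Phi_i$ is immediate from Theorem~\ref{t continuity of spectral resolutions} and Lemma~\ref{l geometric properties of partial isometries}; and the proximity bound $\|\Phi_i(y)-x_i\|\leq\|y-x_0\|+\|P_0(f(y))-P_0(e)\|\cdot\|x_i-x_0\|<\varepsilon$ follows from Lemma~\ref{l geometric properties of partial isometries}(a).

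The main obstacle is enforcing $\|\Phi_i(y)\|\leq 1$ on the nose. Write $\Phi_i(y)=A(y)+B_i(y)$ with $A(y)=(P_1(f(y))+P_2(f(y)))y$ and $B_i(y)=P_0(f(y))(x_i+y-x_0)\in\mathcal{A}_0(f(y))$. At the base point $A(x_0)=e$ and $B_i(x_0)=P_0(e)x_i$ form a JB$^*$-triple orthogonal pair (i.e.\ $e(P_0(e)x_i)^*=e^*(P_0(e)x_i)=0$), giving $\|e+P_0(e)x_i\|=\max\{1,\|P_0(e)x_i\|\}\leq 1$. For $y\neq x_0$ this orthogonality is only approximate, so $\Phi_i(y)^*\Phi_i(y)$ acquires cross terms of order $O(\|f(y)-e\|)$ that are bounded by the cross-Peirce estimates of Lemma~\ref{l geometric properties of partial isometries}(b). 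The strict slack $\|P_0(e)x_0\|<1$ then supplies the margin needed to absorb these errors once $\delta$ is small enough, completing the verification.
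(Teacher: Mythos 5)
Your skeleton matches the paper's: decompose relative to the perturbed support $f(y)$, use $\sum_i\lambda_i(x_i-x_0)=0$ to get $\sum_i\lambda_i\Phi_i(y)=y$, and control errors with Lemma~\ref{l geometric properties of partial isometries} and Remark~\ref{r perturbation of supports}. But the step you flag as ``the main obstacle'' is where the argument genuinely breaks, and the margin you invoke is attached to the wrong quantity. The slack provided by Lemma~\ref{l support of convex combinations} is $\|P_0(e)x_0\|<1$, a bound on the \emph{convex combination}; the term you must dominate is $\|B_i(y)\|=\|P_0(f(y))(x_i+y-x_0)\|$, whose leading part is $\|P_0(e)x_i\|$, and for an \emph{individual} $x_i$ this can equal $1$. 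Concretely, in $M_2$ take $x_1=\mathrm{diag}(1,1)$, $x_2=\mathrm{diag}(1,-1)$, $\lambda_1=\lambda_2=\tfrac12$, so that $x_0=e=\mathrm{diag}(1,0)$, $\|P_0(e)x_1\|=1$ and $\|P_0(e)x_0\|=0$. For $y=\mathrm{diag}(1,t)$ with small $t>0$ one has $f(y)=e$ exactly, so every cross-Peirce error term you propose to absorb vanishes, and yet
\[
\Phi_1(y)=y+P_0(e)(x_1-x_0)=\mathrm{diag}(1,1+t),\qquad \|\Phi_1(y)\|=1+t>1 .
\]
Thus $\Phi_1$ does not map into $B_{\mathcal{A}}$, and shrinking $\delta$ cannot fix this: the failure is first order in $\|y-x_0\|$ with zero margin available to absorb it.

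The paper's proof repairs exactly this point. Setting $a_j=\sum_{i\neq j}\frac{x_i}{n-1}$ and $\mu_j=c/\lambda_j$ for a small $c>0$, it defines
\[
\tilde{x}_j=P_2(f)y+P_0(f)\bigl[x_j+\mu_j(a_j-x_j)+y-x_0\bigr],
\]
which is your $\Phi_j(y)$ plus the correction $P_0(f)\,\mu_j(a_j-x_j)$. Since $\sum_j\lambda_j\mu_j(a_j-x_j)=c\sum_j(a_j-x_j)=0$, the identity $\sum_j\lambda_j\tilde{x}_j=y$ survives; and Lemma~\ref{l norm-control of convex combinations}, applied to $P_0(e)x_j$ and $P_0(e)a_j$ (whose sum has norm $d<2$ thanks to Lemma~\ref{l support of convex combinations}), yields the \emph{quantitative} bound $\|P_0(e)[(1-\mu_j)x_j+\mu_j a_j]\|\le 1-\frac{(4-d^2)\mu_j}{4}$. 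This strictly-below-one estimate on the corrected element is the margin that absorbs the perturbation errors $4\varepsilon_1+\delta$; no such estimate holds for $P_0(e)x_j$ alone. In the example above the correction replaces $\mathrm{diag}(0,1+t)$ by $\mathrm{diag}(0,1-2\mu_1+t)$, which stays in the ball once $\delta\le 2\mu_1$. Without this mixing device, or some substitute for it, your verification of $\|\Phi_i(y)\|\le 1$ cannot be completed.
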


\begin{proof}

Let $\mathcal{A}$ be a finite dimensional C$^*$-algebra, $n \in \mathbb{N}$ . Let $x_1, \ldots, x_n$ be elements in the closed unit ball of $\mathcal{A}$ and let $\lambda_1,\ldots,\lambda_n$ be positive numbers with $\displaystyle \sum_{i=1}^{n} \lambda_i=1$. We claim that for every positive $\varepsilon$ there exist a positive $\delta$ such that given $y\in B_\mathcal{A}$ with $\displaystyle \|y-\sum_{i=1}^{n} \lambda_i x_i\|\leq \delta$, there exist $\tilde{x}_1,\ldots, \tilde{x}_n$ in $ B_\mathcal{A}$ satisfying $\displaystyle y=\sum_{i=1}^{n} \lambda_i \tilde{x}_i$ and $\|x_i-\tilde{x}_i \|\leq \varepsilon$, for each $i\in \{1,\ldots,n\}$.\medskip

Assume first that $\|\displaystyle \sum_{i=1}^{n} \lambda_i x_i\|=1$.\medskip

In this case, by Lemma \ref{l support of convex combinations}, denoting by $e$ the support partial isometry of $ \sum_{i=1}^{n} \lambda_i x_i$, we have that $e\neq 0$ and $e$ is also the support partial isometry of any other (strict) convex combination of the elements $\{x_1,\ldots,x_n\}$.\smallskip

We set, for each $j\in \{1,\ldots,n\}$, $\displaystyle a_j=\sum_{i=1, i\neq j}^{n} \frac{x_i}{n-1} \in B_{\mathcal{A}}$. We define $d=\max \{\|P_0(e)(a_j+x_j)\|: j\in \{1,\ldots,n\}\}$. It should be clear from Lemma \ref{l support of convex combinations} that $d<2$. It is also direct to verify that $\displaystyle \sum_{j=1}^{n} (a_j-x_j)=0$ and $\lambda a_j+(1-\lambda)x_j=e+P_0(e)(\lambda a_j+(1-\lambda)x_j) $ for every $\lambda \in [0,1]$.\smallskip

Fix now $c>0$ satisfying $0 < c\leq \frac{\varepsilon}{4} \min \{\lambda_i:i\in \{1,\ldots,n \}\}$ and define $\mu_j=\displaystyle \frac{c}{\lambda_j}$. It is clearly satisfied that \begin{equation}\label{eq mu} \max \{ \mu_j : j \in \{1,\ldots,n \}\} =\frac{c}{\min \{\lambda_j:j\in \{1,\ldots,n \}\}} \leq \frac{\varepsilon}{4}. \end{equation}

We set $\gamma=1-\|P_0(e)(\sum_{i=1}^{n} \lambda_i x_i)\|$, which is positive by Lemma \ref{l support of convex combinations}.\smallskip

We can associate to every positive $\delta$, the following positive number  $\varepsilon_1=\varepsilon_1(\delta)=\delta+\sqrt{2\delta}+\frac{16}{3}\frac{\sqrt{2\delta}}{\gamma}$, which satisfies $\lim_{\delta \to 0} \varepsilon_1(\delta)=0$.\smallskip

Take $\delta>0$ satisfying $\delta<\sqrt{2\delta}<\frac{\gamma}{4}$,  \begin{equation}\label{eq 3} 4 \varepsilon_1+\delta< \min \{\lambda_j:j\in \{1,\ldots,n \}\} (\frac{4-d^2}{4})
 \end{equation} and \begin{equation}\label{eq 4} 10\varepsilon_1 + 2\delta < \frac{\varepsilon}{2}.
 \end{equation}

Applying Remark \ref{r perturbation of supports} to any $y$ in the closed unit ball of $A$ with $\|\sum_{i=1}^{n} \lambda_ix_i-y\|<\delta$ and denoting by $f$ the spectral resolution of $y$ associated to the set $[1-\delta,1]$, we have that \begin{equation}\label{eq 5}  \|f-e\|\leq \varepsilon_1.
 \end{equation}

We define next the elements $\tilde{x}_j$ and check the desired statements.\smallskip

For each $j\in \{1,\ldots, n\}$, we define $$\tilde{x}_j = P_2(f)y+P_0(f)[x_j+\mu_j(a_j-x_j)+y-\sum_{i=1}^{n} \lambda_i  x_i].$$ It follows straightforwardly that, $$ \sum_{j=1}^{n}  \lambda_j \tilde{x}_j = \sum_{j=1}^{n} \lambda_j P_2(f)y+P_0(f)[\sum_{j=1}^{n} \lambda_j x_j+\sum_{j=1}^{n} \lambda_j\mu_j(a_j-x_j) + \sum_{j=1}^{n} \lambda_j y- \sum_{j=1}^{n} \lambda_j \sum_{i=1}^{n}\lambda_i  x_i]=$$ $$ P_2(f)y+P_0(f)[\sum_{j=1}^{n} \lambda_j x_j + c \sum_{j=1}^{n} (a_j-x_j) +  y-  \sum_{i=1}^{n}\lambda_i  x_i]=P_2(f) y+P_0(f)y=y.$$
It is also satisfied that $\|x_j-\tilde{x}_j\|\leq \varepsilon$ for every $j\in \{1,\ldots,n\}$. Indeed, remembering that $\|\sum_{i=1}^{n} \lambda_ix_i-y\|<\delta$, we have $$\|x_j-\tilde{x}_j\|=\|x_j-P_2(f)y-P_0(f)[x_j+\mu_j(a_j-x_j)+y-\sum_{i=1}^{n} \lambda_i  x_i]\|= $$ $$\|P_2(f)x_j+P_1(f)x_j+P_0(f)x_j-P_2(f)y+f-f- P_0(f)x_j - \mu_jP_0(f)a_j+\mu_j P_0(f)x_j + $$ $$ P_0(f)(y-\sum_{i=1}^{n} \lambda_i  x_i)\|\leq \|P_2(f)x_j-f \| + \|f-P_2(f)y \| + \|P_1(f)x_j \| + \mu_j\|P_0(f)(x_j-a_j) \|+$$ $$ \|P_0(f)((y-\sum_{i=1}^{n} \lambda_i  x_i) \|\leq (\hbox{by (\ref{eq 5}), Lemma \ref{l geometric properties of partial isometries} and the definition of $f$ })\leq $$ $$5\varepsilon_1 + \delta + 5\varepsilon_1 + 2\mu_j+\delta \leq (\hbox{by (\ref{eq mu}) and (\ref{eq 4}) }) \leq \varepsilon.$$

Finally we will show that $\|\tilde{x}_j\|\leq 1$ for every $j\in \{1,\ldots,n\}$. Since $\|\tilde{x}_j\|=\max \{\|P_2(f)y\|,\|P_0(f)[x_j+\mu_j(a_j-x_j)+y-\sum_{i=1}^{n} \lambda_i  x_i] \| \},$ we only have to check that the second summand is less than  or equal to 1. Now  $$\|P_0(f)[x_j+\mu_j(a_j-x_j)+y-\sum_{i=1}^{n} \lambda_i  x_i] \|\leq \|P_0(f)[(1-\mu_j)x_j+\mu_j a_j]\|+\|y-\sum_{i=1}^{n} \lambda_i  x_i\|\leq $$ $$\|P_0(e)[(1-\mu_j)x_j+\mu_j a_j]\|+\|(P_0(e)-P_0(f))[(1-\mu_j)x_j+\mu_j a_j]\|+\|y-\sum_{i=1}^{n} \lambda_i  x_i\| \leq $$ $$(\hbox{by Lemma \ref{l norm-control of convex combinations} and Lemma \ref{l geometric properties of partial isometries} \rm{a)}} ) \leq 1-\frac{4-d^2}{4}\mu_j+ 4\varepsilon_1+\delta \leq (\hbox{by (\ref{eq 3})}) \leq 1. $$

The case $\|\displaystyle \sum_{i=1}^{n} \lambda_i x_i\|<1$ is even simpler. Notice that in this case $e=0$ so that $P_2(e)=P_1(e)=0$ and $P_0(e)=\hbox{Id}_{|\mathcal{A}}$ and if  $\delta<\frac{\gamma}{4}$, the spectral resolution of $y$ corresponding  to the set $[1-\delta,1]$, $f$, is also zero. Defining $\tilde{x_j}$ in the same manner, with the less restrictive assumption $\delta\leq \min \{\frac{\varepsilon}{2}, \min \{ \frac{4-d^2}{4} \mu_j: j\in \{1,\ldots,n\} \}\}$ we arrive to the desired conclusion.\medskip

In order to prove that $\mathcal{A}$ has Property $\hbox{\rm{(co)}}$ (see Definition \ref{d (co)}) and once we have fixed $\delta>0$, we only have to check that the functions $\phi_j:B(x,\delta)\cap B_{\mathcal{A}}\to B(x_j,\varepsilon)\cap B_{\mathcal{A}}$ defined by $\phi_j(y)=\tilde{x}_j$ for every $j\in \{1,\ldots,n\}$ are continuous.\smallskip

Given $y,z$ in $B(x,\delta)\cap B_{\mathcal{A}}$, we have that $$ \|\phi_j(y)-\phi_j(z)\|=\| P_2(f_y)y+P_0(f_y)[x_j+\mu_j(a_j-x_j)+y-\sum_{i=1}^{n} \lambda_i  x_i]- $$ $$ P_2(f_z)z-P_0(f_z)[x_j+\mu_j(a_j-x_j)+z-\sum_{i=1}^{n} \lambda_i  x_i] \|=$$ $$ \|y-z+ (P_0(f_y)-P_0(f_z))[x_j+\mu_j(a_j-x_j)-\sum_{i=1}^{n} \lambda_i  x_i]\|\leq \|y-z\|+8\|f_y-f_z \|, $$ where in the last inequality we have used Lemma \ref{l geometric properties of partial isometries} and $\|x_j+\mu_j(a_j-x_j)-\sum_{i=1}^{n} \lambda_i  x_i \|\leq 2$. Theorem \ref{t continuity of spectral resolutions} assures that the functions $\phi_j$ are continuous.

\end{proof}

\section{Scattered C$^*$-algebras and Properties $(P1)$ and $(\overline{\hbox{P1}})$}
\label{sect: scattered}
\medskip

A topological space $K$ is called \emph{scattered} if each closed subset of $K$ has an isolated point. It is well known that a locally compact Hausdorff topological space is scattered if and only if there exists no non-zero atomless regular Borel measure on $K$ (see \cite{PelSem} and \cite{Lut70}).\smallskip

A C$^*$-algebra $\mathcal{A}$ is said to be a \emph{scattered C$^*$-algebra} if every positive functional on $\mathcal{A}$ is the sum of a sequence of pure functionals \cite{Jen77}. Clearly, an abelian $C^*$-algebra is scattered if and only if the space of characters (equipped with the weak$^*$ topology) is scattered. There are several characterizations of scattered C$^*$-algebras by different authors. For example, they are Type I with scattered spectrum \cite[Corollary 3]{Jen78}, its bidual coincides with an $\ell_{\infty}$-sum of factors of type I \cite[Theorem 2.2]{Jen77}, it admits a composition series such that every quotient algebra is elementary \cite[Theorem 2]{Jen78}, its dual has the Radon-Nikod\'{y}m property \cite[Theorem 3]{Chu81} and every C$^*$-subalgebra has real rank-zero \cite[Theorem 2.3]{Kus}. However, we will take advantage of the characterization obtained by T. Huruya \cite[Theorem]{Hur} which assures that a C$^*$-algebra is scattered if and only if the spectrum of every hermitian element is scattered (i.e. countable).
\medskip

Recently,  T.A. Abrahamsen and V. Lima and R. Haller, P. Kuuseok and M. P\~{o}ldvere show that given $K$ a  compact Hausdorff topological space, the space of continuous $\K$-valued  functions on $K$,  $C(K, \K)$, has property $\hbox{\rm{(P1)}}$ if and only if $K$ is scattered (see \cite{AbrLim, HalKuuPol}). The main goal of this section is to present connections between properties (P1) and $(\overline{\hbox{P1}})$ and being scattered in the setting of general C$^*$-algebras.\smallskip

\begin{remark}\label{r (P1) implies K scattered revisited}
In \cite{HalKuuPol} the authors show that for every non-scattered locally compact Hausdorff space $K$, there exist a convex combination of slices in the unit ball of $\mathcal{A}=C_0(K,\mathbb{R})$ with empty interior. This statement is stronger than  $C_0(K,\mathbb{R})$ fails property $\hbox{\rm{(P1)}}$. If we only want to show the latter result, the (same) arguments given in the proof of Theorem 3.1 in \cite{HalKuuPol} can be shortened. Concretely, since $K$ is not scattered  there exists an atomless measure $\mu$ with $\mu(K)=1$. Let $\varepsilon \in ]0,1[$ and let us define $S_1=S(B_{\mathcal{A}},\mu,\varepsilon)=\{x\in  C_0(K,\mathbb{R}): \|x\|\leq 1,\, \mu(x)>1-\varepsilon\}$, $S_2=-S_1$ and $C=\frac{S_1+S_2}{2}$. Clearly $0$ belongs to $C$ and for each weakly open neighborhood of $0$, $\mathcal{U}$,  there exist disjoint compact sets $K^1, K^2$ contained in $K$, a positive $\delta$ such that  $3\delta+2\sqrt{\delta}< 1-\varepsilon $, $|\mu(K^1)-\mu(K^2)|< \delta$, $\mu(K\backslash (K^1\cup K^2))<\delta$, and a continuous function in the unit sphere of $C_0(K,\mathbb{R})$, $y_{_\mathcal{U}}$, satisfying $y_{_\mathcal{U}} \in \mathcal{U}$, $y_{_\mathcal{U}}(t)=1 \;\;\; \forall t\in K^1$, $y_{_\mathcal{U}}(t)=-1 \;\;\; \forall t\in K^2$, $y_{_\mathcal{U}}\notin C$.
\end{remark}

\begin{proposition}\label{p_(P1) implies scattered}
Every C$^*$-algebra with property $\hbox{\rm{(P1)}}$  or $(\overline{\hbox{\rm{P1}}})$ is scattered.
\end{proposition}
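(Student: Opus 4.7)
The plan is to argue the contrapositive: assuming $\mathcal{A}$ is not scattered, we exhibit a convex combination of slices $C\subseteq B_{\mathcal{A}}$ with $0\in C$ such that no weakly open neighborhood of $0$ lies inside $\overline{C}$. This single counterexample simultaneously defeats both $\hbox{\rm (P1)}$ and $(\overline{\hbox{\rm P1}})$, in the spirit of Remark \ref{r (P1) implies K scattered revisited}.

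By Huruya's theorem \cite{Hur}, non-scatteredness yields a self-adjoint $h\in\mathcal{A}$, which after rescaling we take to satisfy $\|h\|\le 1$, whose spectrum $K:=\sigma(h)\subseteq[-1,1]$ is uncountable. Being an uncountable compact metric space, $K$ supports an atomless regular Borel probability measure $\mu$. Continuous functional calculus and Hahn--Banach turn $\mu$ into a state $\widetilde{\phi}$ on the unitization $\widetilde{\mathcal{A}}$, whose restriction $\phi:=\widetilde{\phi}|_{\mathcal{A}}$ satisfies $\|\phi\|=1$ and $\phi(f(h))=\int_K f\,d\mu$ for every $f\in C_0(K\setminus\{0\})$.

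Fix $\varepsilon\in(0,1/4)$ and set
\[
S_1:=S(B_{\mathcal{A}},\phi,\varepsilon),\qquad S_2:=S(B_{\mathcal{A}},-\phi,\varepsilon),\qquad C:=\tfrac{1}{2}(S_1+S_2).
\]
As $\|\phi\|=1$ and $S_2=-S_1$, we have $0\in C$. The GNS triple $(\pi,H,\xi)$ of $\widetilde{\phi}$ supplies a sharp upper bound: for $s_j\in S_j$,
\[
\|\pi(s_1)\xi-\xi\|^2\le 2-2\mathfrak{Re}\,\widetilde{\phi}(s_1)<2\varepsilon,\qquad \|\pi(s_2)\xi+\xi\|^2\le 2+2\mathfrak{Re}\,\widetilde{\phi}(s_2)<2\varepsilon,
\]
and therefore $\widetilde{\phi}(y^*y)=\|\pi(y)\xi\|^2<2\varepsilon$ for every $y\in C$ by the triangle inequality. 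Norm continuity of $y\mapsto \widetilde{\phi}(y^*y)$ propagates this inequality to all of $\overline{C}$.

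Now let $\mathcal{U}=\{a\in B_{\mathcal{A}}:|\psi_i(a)|<\eta_i,\ i=1,\ldots,n\}$ be any weakly open neighborhood of $0$ in $B_{\mathcal{A}}$, and use Riesz to represent each restriction $\psi_i|_{C^*(h,1)}$ by a complex Borel measure $\nu_i$ on $K$. It suffices to exhibit $f\in C_0(K\setminus\{0\})$ with $\|f\|_\infty\le 1$, $\int_K|f|^2\,d\mu>2\varepsilon$, and $|\int_K f\,d\nu_i|<\eta_i$ for every $i$: then $y:=f(h)\in\mathcal{U}$ while $\widetilde{\phi}(y^*y)=\int_K|f|^2\,d\mu>2\varepsilon$ forbids $y\in\overline{C}$, completing the proof. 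The main obstacle is this measure-theoretic construction. Atomlessness of $\mu$, together with the fact that the atoms of each $\nu_i$ form a countable $\mu$-null set that may be discarded, lets us apply Liapunov's convexity theorem to the non-atomic vector measure $(\nu_1,\ldots,\nu_n,\mu)$: one obtains a Borel partition $K=E\sqcup(K\setminus E)$ with $\mu(E)\approx\tfrac{1}{2}$ and $\nu_i(E)\approx\tfrac{1}{2}\nu_i(K)$ for every $i$, and an approximation of $\mathbf{1}_E-\mathbf{1}_{K\setminus E}$ in $L^1(\mu)\cap\bigcap_i L^1(|\nu_i|)$ by a continuous function vanishing at $0$ delivers the sought-after $f$.
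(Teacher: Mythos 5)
Your overall blueprint coincides with the paper's: use Huruya's characterization to produce a self-adjoint $h$ whose spectrum is uncountable, put an atomless regular Borel probability measure $\mu$ on $\sigma(h)$, extend it to a state $\phi$ of $\mathcal{A}$, and show that $0\in C=\tfrac12(S_1+S_2)$ with $S_2=-S_1=-S(B_{\mathcal{A}},\phi,\varepsilon)$ admits no relatively weakly open neighborhood inside $\overline{C}$. Your certificate that elements of $\overline{C}$ are ``concentrated'' is different from, and arguably cleaner than, the paper's: the GNS estimate $\widetilde{\phi}(y^*y)\le 2\varepsilon$ on $\overline{C}$ replaces the paper's passage to $\mathcal{A}^{**}$ with the support projections $p_1,p_2$ and the symmetry $e=p_1-p_2$, and that estimate is correct. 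The step both proofs must face is producing, inside an arbitrary weak neighborhood $\mathcal{U}$ of $0$, an element $f(h)$ with $\int|f|^2\,d\mu$ bounded away from $0$; the paper delegates this to \cite{HalKuuPol} via Remark \ref{r (P1) implies K scattered revisited}, while you attempt it directly, and it is exactly here that your argument has a genuine gap.

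The gap: you cannot ``discard'' the atoms of the $\nu_i$ and still conclude $\nu_i(E)\approx\tfrac12\nu_i(K)$, hence neither that $\int(\mathbf{1}_E-\mathbf{1}_{K\setminus E})\,d\nu_i\approx 0$. Take $\psi_1$ to extend the pure state of $C^*(h)$ given by evaluation at a point $t_0\in\sigma(h)\setminus\{0\}$, so $\nu_1=\delta_{t_0}$. Liapunov applied after removing the ($\mu$-null) atom yields $E\subseteq K\setminus\{t_0\}$ with $\mu(E)=\tfrac12$ but $\nu_1(E)=0$, whence $\int(\mathbf{1}_E-\mathbf{1}_{K\setminus E})\,d\nu_1=-\nu_1(\{t_0\})=-1$; any continuous $f$ close to $\mathbf{1}_E-\mathbf{1}_{K\setminus E}$ in $L^1(|\nu_1|)$ then has $|\int f\,d\nu_1|$ close to $1$, so $f(h)\notin\mathcal{U}$ as soon as $\eta_1<1$. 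The underlying point is that a continuous function cannot take prescribed values on a (possibly dense) countable set independently of its values elsewhere, so the atomic parts of the $\nu_i$ genuinely interfere with the construction. The repair is routine but must be made explicit: pick a finite set $F$ carrying all but $\epsilon$ of the atomic mass of every $|\nu_i|$, choose an open $V\supseteq F$ with $\mu(V)<\epsilon$ and $|\nu_i^c|(V)<\epsilon$ (possible since $\mu$ and the continuous parts $\nu_i^c$ charge no points), apply Liapunov to $(\nu_1^c,\dots,\nu_n^c,\mu)$ on $K\setminus V$ only, and define $g$ to be $0$ on $V$ (and near $0$ if $0\in\sigma(h)$) and $\pm1$ according to the Liapunov partition elsewhere, before invoking Lusin. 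With that correction your argument closes and proves the proposition.
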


\begin{proof}

Let $\mathcal{A}$ be a C$^*$-algebra satisfying property  $(\overline{\hbox{P1}})$. $\mathcal{A}$ will be scattered the moment we show that the spectrum of every self-adjoint element is scattered.\smallskip

Assume that there exists $h\in \mathcal{A}_{sa}$ with non-scattered spectrum. Let us denote by $C_h$ the abelian C$^*$-subalgebra generated by $h$ which is isomorphic  to $C_0(Sp(h),\mathbb{C})$ (\cite[Proposition 1.1.8 ]{Ped}). Since the spectrum of $h$ is non-scattered  there exists an atomless regular Borel measure on $Sp(h)$, $\mu$, with $\mu(Sp(h))=1$. Fix $0<\varepsilon <1$ and define $S_1=S(B_{\mathcal{A}},\mu,\varepsilon)=\{z\in  B_{\mathcal{A}}:  \mathfrak{Re}\mu(z)>1-\varepsilon\}$, $S_2=-S_1$ and $S=\frac{S_1+S_2}{2}$. Clearly $0$ belongs to $S$ and we will show that every relatively weakly open subset of the closed unit ball of $\mathcal{A}$ containing 0 has an element not contained in $S$.\smallskip

Let $\mathcal{W}=\{z\in B_\mathcal{A}: |\varphi_i(z)|<\gamma, i=1,\ldots,n\}$ where $\varphi_i\in \mathcal{A}^*$ ($i=1,\ldots,n$) and $\gamma>0$. Let us denote by $f_i$ the restriction of $\varphi_i$ to $C_h$. Clearly $\mathcal{U}=\{z\in B_{C_h}: |f_i(z)|<\gamma, i=1,\ldots,n\} \subseteq \mathcal{W}$ is a neighborhood of 0 in the weak topology of $C_h$ restricted to its unit ball. By Remark \ref{r (P1) implies K scattered revisited} there exist disjoint compact sets $K^1, K^2$ contained in $Sp(h)$, a positive $\delta$ such that  $3\delta+2\sqrt{\delta}< 1-\varepsilon $,  $|\mu(K^1)-\mu(K^2)|< \delta$, $\mu(K\backslash (K^1\cup K^2))<\delta$, and an element in the unit sphere of $C_h$, $y_{_\mathcal{U}}$, satisfying $y_{_\mathcal{U}} \in \mathcal{U}$, $y_{_\mathcal{U}}(t)=1 \;\;\; \forall t\in K^1$, $y_{_\mathcal{U}}(t)=-1 \;\;\; \forall t\in K^2$.\smallskip

We will consider $\mu$ both as a measure on $Sp(h)$ and as a functional on $\mathcal{A}^{**}$ via the identification of the sets contained in $Sp(h)$ and their associated characteristic functions in $\mathcal{A}^{**}$. Under this considerations, we denote by $p_1$ (respectively, $p_2$) the projection in $\mathcal{A}^{**}$ given by the characteristic function associated to the set $K^1$ (respectively, $K^2$) and we set $p=p_1+p_2$. Clearly $p_1$ and $p_2$ are orthogonal,  $\mu(p_j)=\mu(K^j)$ ($j=1,2$) and $|\mu(p_1)-\mu(p_2)|<\delta$. Having in mind that $\mu(Sp(h))=1$ we have that $$1=\mu(Sp(h))=\mu(K^1)+\mu(K^2)+\mu(Sp(h) \backslash (K^1\cup K^2)) < \mu(K^1)+\mu(K^2)+\delta,$$ and consequently $\mu(p)=\mu(p_1)+\mu(p_2)>1-\delta$.\medskip

The M-orthogonality between $p\mathcal{A}^{**}p$ and $(1-p)\mathcal{A}^{**}(1-p)$ together with $\mu(p)>1-\delta$ assures that $\|\mu_{|_{(1-p)\mathcal{A}^{**}(1-p)}}\|< \delta$. Moreover, $\mu(1-p)<\delta$ and by the Cauchy-Schwarz inequality \cite[Theorem 3.1.3]{Ped} we have that $|\mu((1-p)z)|, |\mu(z(1-p))|<\sqrt{\delta}  $ for every $z\in B_{\mathcal{A}}$.\smallskip

Suppose now that $y_{_\mathcal{U}}$ belongs to $\bar{S}$, so that $y_{_\mathcal{U}}=\lim\frac{x_n+y_n}{2}$ with $x_n\in S_1$, $y_n\in S_2$. Denoting by $e=p_1-p_2$ (a symmetry  in the C$^*$-algebra $p\mathcal{A}^{**}p$),  simple arguments with respect to the order in $p\mathcal{A}^{**}p$ gives $e=\lim px_np$ and $e=\lim py_np$. Therefore, for every natural $n$ with $\|e-px_np\|\leq \delta$ we have that $$ |\mu(x_n)|\leq |\mu(px_np-e)|+|\mu(e)|+|\mu((1-p)x_np)|+|\mu(px_n(1-p))|+$$ $$|\mu((1-p)x_n(1-p))|\leq 3\delta+2\sqrt{\delta}< 1-\varepsilon $$ which gets in contradiction with $x_n\in S_1$.

\end{proof}

Notice that there exists scattered C$^*$-algebras not satisfying property $(\overline{\hbox{\rm{P1}}})$. For example, it is straightforwardly to check that the (scattered) C$^*$-algebra of compact operators on an infinite dimensional Hilbert space do not satisfy property $(\overline{\hbox{\rm{P1}}})$.\smallskip

We recall that a \emph{compact C$^*$-algebra} is the $c_0$-sum of algebras of compact operators on a Hilbert space (see \cite[Theorem 8.2]{Ale}). The following result is a generalization of  \cite[Theorem 2.3]{AbrLim} and \cite[Theorem 3.1]{HalKuuPol}.

\begin{theorem}\label{c C(K,Mn) has P1}
Let $K$ be a locally compact Hausdorff topological space and let $\mathcal{A}$ be a compact C$^*$-algebra. The following assertions are equivalent:

\begin{enumerate}
\item[i)] $C_0(K,\mathcal{A})$ has property $\hbox{\rm{(P1)}}$

\item[ii)] $C_0(K,\mathcal{A})$ has property $(\overline{\hbox{\rm{P1}}})$

\item[iii)] $K$ is scattered and $\mathcal{A}$ is the $c_0$-sum of finite-dimensional C$^*$-algebras.

\end{enumerate}

\end{theorem}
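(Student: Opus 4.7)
My plan is to establish (i) $\Rightarrow$ (ii), (iii) $\Rightarrow$ (i), and (ii) $\Rightarrow$ (iii). The implication (i) $\Rightarrow$ (ii) is immediate from Definition \ref{defn:cwos}.

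For (iii) $\Rightarrow$ (i), I write $\mathcal{A}=\bigoplus_{\gamma\in\Gamma}^{c_0}\mathcal{A}_\gamma$ with each $\mathcal{A}_\gamma$ finite-dimensional and use the canonical identification $C_0(K,\mathcal{A})=\bigoplus_{\gamma\in\Gamma}^{c_0} C_0(K,\mathcal{A}_\gamma)$. By Theorem \ref{t Mn is (co)} each $\mathcal{A}_\gamma$ has property $\hbox{\rm{(co)}}$, so the locally compact version of \cite[Theorem~2.5]{AbrBecHalLimPol} employed in Corollary \ref{slices-for-C_0} gives that each $C_0(K,\mathcal{A}_\gamma)$ has $\hbox{\rm{(P1)}}$, and Proposition \ref{c0sum} transfers this to the $c_0$-sum.

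For (ii) $\Rightarrow$ (iii), the substantive direction, I discard the trivial cases $\mathcal{A}=0$ and $K=\emptyset$, and write $\mathcal{A}=\bigoplus_{\gamma\in\Gamma}^{c_0}K(H_\gamma)$ using that $\mathcal{A}$ is compact. To see that $K$ is scattered, note that by Proposition \ref{p_(P1) implies scattered} the algebra $C_0(K,\mathcal{A})$ is itself scattered. Fixing a rank-one projection $e\in K(H_{\gamma_0})\subseteq\mathcal{A}$, the map $\phi:C_0(K)\to C_0(K,\mathcal{A})$ given by $\phi(f)(t)=f(t)e$ is an isometric $*$-homomorphism. By Huruya's characterization \cite[Theorem]{Hur} together with spectral permanence in C$^*$-algebras, scatteredness descends from $C_0(K,\mathcal{A})$ to the C$^*$-subalgebra $\phi(C_0(K))\cong C_0(K)$, so $K$ is scattered.

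The main obstacle is ruling out infinite-dimensional $H_\gamma$. Suppose some $H_{\gamma_0}$ is infinite-dimensional. The canonical projection of $\mathcal{A}$ onto its $\gamma_0$-summand is contractive, so $C_0(K,K(H_{\gamma_0}))$ is norm-one complemented in $C_0(K,\mathcal{A})$; by Proposition \ref{1-complementado} it inherits $(\overline{\hbox{\rm{P1}}})$. Since $K$ is scattered and nonempty it has an isolated point $t_0$, whence $\{t_0\}$ is clopen and compact, and $\chi_{\{t_0\}}\in C_0(K)$. The contractive projection $F\mapsto \chi_{\{t_0\}}\otimes F(t_0)$ maps $C_0(K,K(H_{\gamma_0}))$ onto the isometric copy of $K(H_{\gamma_0})$ realized by $a\mapsto \chi_{\{t_0\}}\otimes a$. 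A second application of Proposition \ref{1-complementado} forces $K(H_{\gamma_0})$ to have $(\overline{\hbox{\rm{P1}}})$, contradicting the observation immediately preceding the theorem that $K(H)$ fails $(\overline{\hbox{\rm{P1}}})$ whenever $\dim H=\infty$. Hence every $H_\gamma$ is finite-dimensional, completing (iii).
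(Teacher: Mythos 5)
Your proof is correct and follows essentially the same route as the paper's: the implication iii) $\Rightarrow$ i) via Theorem \ref{t Mn is (co)}, Corollary \ref{slices-for-C_0} and Proposition \ref{c0sum}, and ii) $\Rightarrow$ iii) by extracting a one-complemented copy of $C_0(K)$ (to get $K$ scattered via Proposition \ref{p_(P1) implies scattered}) and, using an isolated point of $K$, a one-complemented copy of each summand of $\mathcal{A}$, whose $(\overline{\hbox{\rm{P1}}})$ property rules out infinite-dimensional $K(H_\gamma)$. The only (harmless) deviations are that you derive scatteredness of $K$ by passing scatteredness of $C_0(K,\mathcal{A})$ down to the subalgebra $C_0(K)$ via Huruya's criterion rather than passing $(\overline{\hbox{\rm{P1}}})$ down first, and that you spell out explicitly the complementation argument that the paper leaves implicit in its final sentence of ii) $\Rightarrow$ iii).
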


\begin{proof}

Clearly \textit{i)} implies \textit{ii)}.\smallskip

To show that \textit{ii)} implies \textit{iii)}, let us assume that  $C_0(K,\mathcal{A})$ has property $(\overline{\hbox{\rm{P1}}})$. It is well known that $C_0(K)$ is isometrically isomorphic to a norm-closed one-complemented subspace of $C_0(K,\mathcal{A})$. By Proposition  \ref{1-complementado} we have that $C_0(K)$ has property $(\overline{\hbox{\rm{P1}}})$ and Proposition \ref{p_(P1) implies scattered} assures that $K$ is scattered. Since $K$ has isolated points we deduce that $\mathcal{A}$ is also one-complemented subspace of $C_0(K,\mathcal{A})$ and hence has property $(\overline{\hbox{\rm{P1}}})$, which gives that $\mathcal{A}$ is the $c_0$-sum of finite-dimensional C$^*$-algebras.\smallskip

Finally, if $K$ is scattered and $\mathcal{A}$ is the $c_0$-sum of finite-dimensional C$^*$-algebras, $C_0(K,\mathcal{A})$ has property $\hbox{\rm{(P1)}}$ by Corollary \ref{slices-for-C_0} and Theorem \ref{t Mn is (co)}.
\end{proof}

Given a C$^*$-algebra, $\mathcal{A}$, there exists a factorization of its bidual as the sum of its atomic and non-atomic ideals. More concretely, the atomic representation of $\mathcal{A}$ is an $\ell_{\infty}$-sum  of $B(H_{\pi})$ where $H_{\pi}$ are Hilbert spaces associated to (unitarily equivalent)  irreducible representations and the sum is taken over the spectrum of $\mathcal{A}$ (see \cite[4.3.7 and 4.3.8]{Ped}). We say that every factor in the atomic decomposition of $\mathcal{A}$ is finite dimensional whenever $dim(H_{\pi})<\infty$ for each irreducible representation.

\begin{proposition}\label{p_(P1) and Cartan factors}
Let $\mathcal{A}$ be a C$^*$-algebra  satisfying property $\hbox{\rm{(P1)}}$  or $(\overline{\hbox{\rm{P1}}})$. Then every factor appearing in the atomic decomposition of $\mathcal{A}$ is finite dimensional.
\end{proposition}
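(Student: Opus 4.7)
The strategy is to combine three ingredients already established in the paper: the implication \emph{property $(\overline{\hbox{P1}})$ $\Rightarrow$ scattered} from Proposition \ref{p_(P1) implies scattered}, the ideal inheritance from Proposition \ref{locally-complemented-ideal}, and the remark following Theorem \ref{c C(K,Mn) has P1} stating that $K(H)$ fails $(\overline{\hbox{P1}})$ when $\dim H=\infty$. Assuming some atomic factor $B(H_{\pi_0})$ is infinite dimensional, the plan is to isolate a closed ideal of $\mathcal{A}$ that projects onto $K(H_{\pi_0})$ and then push the property down through the quotient, producing the desired contradiction.

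More precisely, by Proposition \ref{p_(P1) implies scattered}, $\mathcal{A}$ is scattered, hence type I with $\mathcal{A}^{**}=\bigoplus_{\pi}^{\ell_\infty}B(H_\pi)$, and $\pi(\mathcal{A})\supseteq K(H_\pi)$ for every irreducible representation $\pi$. Fixing $\pi_0$ with $\dim H_{\pi_0}=\infty$, set $J:=\pi_0^{-1}(K(H_{\pi_0}))$. This is a norm-closed two-sided ideal of $\mathcal{A}$, hence an $M$-ideal, hence an ideal in the Kalton--Lima sense; since slices are defined via $\mathfrak{Re}$ of functionals, the real-Banach-space version of Proposition \ref{locally-complemented-ideal} applies to the underlying real structure and $J$ inherits $(\overline{\hbox{P1}})$. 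Note also that $J/\ker\pi_0\cong K(H_{\pi_0})$.

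The crux is to transfer $(\overline{\hbox{P1}})$ from $J$ to $J/\ker\pi_0$. Because $\ker\pi_0$ is an $M$-ideal in $J$, one has the isometric decomposition $J^{**}=(\ker\pi_0)^{**}\oplus^{\ell_\infty}K(H_{\pi_0})^{**}$, together with a $w^*$-continuous contractive projection $P$ onto the second summand. Given a convex combination $C=\sum_i\lambda_i S_i$ of $w^*$-slices in $B_{K(H_{\pi_0})^{**}}$ and $\tilde x\in C\cap K(H_{\pi_0})$, lift $\tilde x$ to $x\in J$ with $\|x\|\le 1$ (the C$^*$-quotient norm is attained on the unit ball) and lift the defining functionals to $J^*$ by the natural extension annihilating $\ker\pi_0$. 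If $\tilde x=\sum_i\lambda_i\tilde y_i$ with $\tilde y_i\in S_i$, then $y_i:=\tilde y_i+(x-\tilde x)$ belongs to the lifted slice $\widetilde S_i$ in $B_{J^{**}}$, with $\|y_i\|\le 1$ granted by $\ell_\infty$-orthogonality of the summands (since $x-\tilde x\in(\ker\pi_0)^{**}$ and $\tilde y_i\in K(H_{\pi_0})^{**}$), so $x\in\widetilde C:=\sum_i\lambda_i\widetilde S_i$. Lemma \ref{P1-bidual} applied to $J$ yields a $w^*$-open neighbourhood $W$ of $x$ in $B_{J^{**}}$ with $W\subseteq\overline{\widetilde C}^{w^*}$. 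Translating $W$ by $-(x-\tilde x)$ and intersecting with $B_{K(H_{\pi_0})^{**}}$ produces a $w^*$-open neighbourhood of $\tilde x$ which, using the $w^*$-continuity of $P$ and $P(\widetilde C)\subseteq C$, is contained in $\overline{C}^{w^*}$. By Lemma \ref{P1-bidual} again, $K(H_{\pi_0})$ would satisfy $(\overline{\hbox{P1}})$, contradicting the remark after Theorem \ref{c C(K,Mn) has P1}. The case of property $(\hbox{P1})$ follows since it implies $(\overline{\hbox{P1}})$.

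The main obstacle is the quotient transference in the third paragraph: it is not among the stability results of Section \ref{sec:ckx-spaces} and requires combining the $M$-ideal bidual decomposition of C$^*$-quotients with the bidual reformulation of $(\overline{\hbox{P1}})$ from Lemma \ref{P1-bidual}. The remaining delicate point, lifting $\tilde x$ to $x\in J$ with $\|x\|\le 1$, is handled by standard C$^*$-algebraic lifting (or, if preferred, by an $\varepsilon$-approximation together with convexity of $\overline{C}$).
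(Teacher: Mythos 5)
Your argument is essentially correct, but it takes a genuinely different route from the paper's. The paper works directly inside $\mathcal{A}$: it fixes a minimal projection $p$ in the infinite-dimensional factor, takes its support functional $\varphi_0$, forms the convex combination $S=\frac12\bigl(S(B_{\mathcal{A}},\varphi_0,\varepsilon)+(-S(B_{\mathcal{A}},\varphi_0,\varepsilon))\bigr)\ni 0$, shows $\|pz(1-p)\|\leq (1-(1-\varepsilon)^2)^{1/2}<1$ for every $z\in\overline{S}$, and then uses a weakly null family of rank-one partial isometries $u_i$ with $pu_i=u_i$, $u_ip=0$ --- pulled back into $B_{\mathcal{A}}$ via Kadison transitivity (compactness of rank-one partial isometries in $\mathcal{A}^{**}$) --- to place in any weak neighbourhood of $0$ an element $a$ with $\|pa(1-p)\|=1$. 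Your reduction through $J=\pi_0^{-1}(K(H_{\pi_0}))$ and the quotient $J/\ker\pi_0\cong K(H_{\pi_0})$ is sound: closed two-sided ideals are $M$-ideals, hence ideals in the Kalton--Lima sense for the underlying real structure (and $(\overline{\hbox{\rm{P1}}})$ is insensitive to the scalar field, since real slices and the weak topology coincide with the complex ones), so Proposition \ref{locally-complemented-ideal} passes the property to $J$; and your quotient-transference step, via the central decomposition $J^{**}=J^{**}z\oplus^{\ell_\infty}J^{**}(1-z)$ and both directions of Lemma \ref{P1-bidual}, does go through as written (the $\ell_\infty$-orthogonality gives $\|\tilde y_i+(x-\tilde x)\|\leq 1$, the lifted functionals keep their norm because they annihilate $(\ker\pi_0)^{**}$, and the $w^*$-continuous projection carries $\overline{\widetilde C}^{w^*}$ into $\overline{C}^{w^*}$). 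That quotient stability is a genuinely new lemma, absent from Section \ref{sec:ckx-spaces}, and would be worth recording on its own. What your route buys is a clean factorization through the stability machinery; what it costs is this extra lemma plus a base case.

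The base case is the one real caveat. The assertion that $K(H)$ fails $(\overline{\hbox{\rm{P1}}})$ for infinite-dimensional $H$ is only \emph{stated} in the paper ("straightforwardly to check", in the remark that in fact precedes, not follows, Theorem \ref{c C(K,Mn) has P1}), and it is exactly the special case $\mathcal{A}=K(H)$ of the proposition you are proving; the paper's proof of the proposition is, in effect, the verification of that remark generalized via Kadison transitivity. So as it stands your argument is a reduction to an unproved special case rather than a complete proof. To close it, include the short direct verification for $K(H)$: with $p$, $\varphi_0$ and the rank-one partial isometries $u_n$ as above (all already lying in $K(H)$, so no transitivity theorem is needed), the sequence $(u_n)$ is weakly null and $\|pu_n(1-p)\|=1$, while every $z\in\overline{S}$ satisfies $\|pz(1-p)\|<1$; hence $0\in S$ but no relatively weakly open neighbourhood of $0$ is contained in $\overline{S}$.
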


\begin{proof}

Assume that there exists an irreducible representation of $\mathcal{A}$ over $B(H)$, with $H$  infinite dimensional Hilbert space. Fix a norm one vector $\eta_0 \in H$ and an orthonormal system $\{\eta_i:i\in I\}$ such that $\{\eta_0\}\cup \{\eta_i:i\in I\}$ is a basis of $H$. We define a minimal projection $p=\eta_0\otimes \eta_0$ and rank one partial isometries $u_i=\eta_i \otimes \eta_0$ in $\mathcal{A}^{**}$. Clearly, $pu_i=u_i$, $u_ip=0$, and $u_iu_i^*p=u_iu_i^*=p$ for every $i\in I$. Moreover, the closure of the linear span of $\{p\}\cup\{u_i:i\in I\}$ is isometric to the Hilbert space $H$ (i.e. $p\mathcal{A}^{**}=pB(H)\cong H$) and hence for every $\varphi \in \mathcal{A}^*$ and $\delta >0$ the set $\{i\in I: |\varphi(u_i)|\geq \delta\}$ is finite.\medskip

We denote by $\varphi_0$ the support functional associated to the minimal projection $p$ (the extreme point in the unit ball of $\mathcal{A}^*$ satisfying $\varphi_0(p)=1$). More concretely, $ \varphi_0(z)=\varphi_0(pzp)$ for every $z\in \mathcal{A}^{**}$ and hence $\|x\|=|\varphi_0(x)|=\varphi_0(x^*x)^{\frac 12}$ for all $x\in p\mathcal{A}^{**}p$.\smallskip

Given $0<\varepsilon<1$, we define $S_1=S(B_{\mathcal{A}},\varphi_0,\varepsilon)=\{x\in B_{\mathcal{A}}: \mathfrak{Re}\varphi_0(x)>1-\varepsilon\}$, $S_2=-S_1$ and $S=\frac{S_1+S_2}{2}$.\smallskip

For each $x$ in $S_1$ we have that $\|px\|^2=\|pxp\|^2+\|px(1-p)\|^2$ thus $\|px(1-p)\|\leq (1-\|pxp\|^2)^{\frac 12 } = (1-|\varphi_0(x)|^2)^{\frac 12 } \leq (1-(1-\varepsilon)^2)^{\frac 12 }$ and the same happens with elements $y$ in $S_2$, thus
\begin{equation}\label{eq norms in S} \|pz(1-p)\|\leq  (1-(1-\varepsilon)^2)^{\frac 12 } <1 \hbox{ for all } z \hbox{ in } \bar{S}.
 \end{equation}

Given a relatively weakly open subset of $B_{\mathcal{A}}$ containing 0, $U=\{x\in B_{\mathcal{A}}: |\varphi_j(x)|<\delta, j=1,\ldots,n\}$ where $\varphi_j\in \mathcal{A}^*$ for every $j=1,\ldots,n$ and $\delta$ is a positive number, we will find a norm-one element  $a\in U$ satisfying $\|pa(1-p)\| =1 $ so that $a\notin \bar{S}$ by (\ref{eq norms in S}).  \medskip

As claimed at the beginning of the proof there exists $u\in \{u_i:i\in I\}$ satisfying $|\varphi_j(u)|<\frac{\delta}{2} $ for every $j=1,\ldots,n$. We recall that, by Kadison's transitivity theorem,  rank one partial isometries in $\mathcal{A}^{**}$ are compact (belong locally to $\mathcal{A}$ in the terminology of \cite{AkePed}) thus  there exists a net $(a_{\lambda})$ in $B_\mathcal{A}$ converging in the weak$^*$-topology to $u$ and satisfying $(a_{\lambda})=u+(1-uu^*)(a_{\lambda})(1-u^*u)$ for every $\lambda$ (see \cite{AkePed} or \cite[Theorem 5.1]{EdwRut}).\smallskip

The weak$^*$-convergence of $(a_{\lambda})$ to $u$ assures the existence of $\lambda_0$ such that $|\varphi_j(a_{\lambda_0})|<\delta $ for every $j=1,\ldots,n$. Therefore, denoting by $a=a_{\lambda_0}$ we have that $a\in U$ and $a=u+(1-uu^*)a(1-u^*u)$. It is straightforward to verify that $$\|pa(1-p)\|= \|p(u+(1-uu^*)a(1-u^*u))(1-p) \|=\|u+p(1-uu^*)a(1-u^*u)(1-p)\|=$$ $$\|u+(1-uu^*)pa(1-p)(1-u^*u)\|= 1,$$ which gives a contradiction with (\ref{eq norms in S}).
\end{proof}

Our final result will be a  complete characterization of C$^*$-algebras satisfying property $(\overline{\hbox{\rm{P1}}})$.\smallskip

\begin{theorem}\label{t caracterizacion P1 barra}
 A C$^*$-algebra satisfy property $(\overline{\hbox{\rm{P1}}})$ if and only if it is scattered with finite dimensional irreducible representations.

\end{theorem}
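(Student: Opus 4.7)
The plan is to prove the equivalence by combining the structural results of this section with Jensen's characterization of scattered C$^*$-algebras. The only-if implication will be a direct application of two earlier propositions in this section, while the converse will exploit the atomic structure of the bidual supplied by Jensen's theorem to invoke the stability result Theorem \ref{thm-sum}.

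The only-if direction is essentially a citation. If $\mathcal{A}$ has $(\overline{\hbox{P1}})$, then Proposition \ref{p_(P1) implies scattered} yields that $\mathcal{A}$ is scattered, and Proposition \ref{p_(P1) and Cartan factors} yields that every factor $B(H_\pi)$ appearing in the atomic decomposition of $\mathcal{A}^{**}$ is finite dimensional. Since these factors correspond bijectively to the unitary equivalence classes of irreducible representations of $\mathcal{A}$, this is exactly the statement that every irreducible representation of $\mathcal{A}$ is finite dimensional.

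For the converse, assume $\mathcal{A}$ is scattered with only finite dimensional irreducible representations. By Jensen's structure theorem (\cite[Theorem 2.2]{Jen77}), $\mathcal{A}^{**}$ is atomic, hence identifies with $\bigoplus^{\ell_\infty}_\pi B(H_\pi)$ indexed by the spectrum. The finite dimensionality assumption forces each $B(H_\pi)$ to be a matrix algebra $M_{n_\pi}$, so $\mathcal{A}^{**} = \bigoplus^{\ell_\infty}_\pi M_{n_\pi}$ and, dually, $\mathcal{A}^{\ast} = \bigoplus^{\ell_1}_\pi (M_{n_\pi})_\ast$, displaying $\mathcal{A}^\ast$ as an $\ell_1$-sum of finite dimensional Banach spaces. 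Since each $M_{n_\pi}$ has property $(\hbox{co})$ by Theorem \ref{t Mn is (co)}, the hypotheses of Theorem \ref{thm-sum} are fulfilled and so $\mathcal{A}$ has property $(\overline{\hbox{P1}})$.

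The only delicate point is checking that the $\ell_1$-decomposition of $\mathcal{A}^\ast$ produced by Jensen's theorem really matches the format required by Theorem \ref{thm-sum} with summands carrying the $(\hbox{co})$-property; this is precisely what the finite dimensionality of the irreducible representations buys us. Once this is recognised, the hard geometric content is entirely absorbed by the $(\hbox{co})$-property for finite dimensional C$^*$-algebras established in Section \ref{(co)}, and the present result amounts to assembling the correct structural ingredients.
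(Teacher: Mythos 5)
Your proposal is correct and follows essentially the same route as the paper: the only-if direction is exactly Propositions \ref{p_(P1) implies scattered} and \ref{p_(P1) and Cartan factors}, and the if direction combines Jensen's atomic description of the bidual of a scattered C$^*$-algebra with Theorem \ref{t Mn is (co)} and Theorem \ref{thm-sum}, which is precisely how the paper assembles the proof. Your added remark on matching the $\ell_1$-decomposition of $\mathcal{A}^*$ (with summands the preduals $(M_{n_\pi})_*$ while the $(\hbox{co})$-property is verified for the finite-dimensional algebras $M_{n_\pi}$ appearing in $\mathcal{A}^{**}$) is a worthwhile clarification of a point the paper leaves implicit.
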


\begin{proof}
The only if part was given in Proposition \ref{p_(P1) implies scattered} and Proposition \ref{p_(P1) and Cartan factors} while the if part is a consequence of Theorem \ref{t Mn is (co)} and Theorem \ref{thm-sum}.
\end{proof}

\bigskip
\textbf{Acknowledgements} The first author has been partially supported by Junta de Andaluc\'ia grant FQM199 and  Spanish government grant MTMT2016-76327-C3-2-P. The second author has been partially supported by
Junta de Andaluc\'{\i}a grant FQM375.\smallskip

\bibliographystyle{amsplain}

\end{document}